\documentclass[11pt]{article}

\usepackage{amsmath,amsthm,amsfonts,amssymb,mathtools}
\usepackage{enumitem}

\usepackage[margin=2.3cm,nohead]{geometry}

\usepackage{url}
\usepackage{hyperref}
\usepackage{graphicx}
\usepackage{grffile}
\usepackage{float}
\usepackage{caption}
\usepackage{subcaption}
\graphicspath{{imgs/}}

\usepackage{booktabs}
\usepackage{multirow}

\usepackage{algorithm}
\usepackage{algpseudocode}

\usepackage[
  backend=biber,
  style=ieee,
  citestyle=numeric-comp,
  url=false,
  isbn=false,
  sortcites=true
]{biblatex}
\setlist[enumerate]{label=(\roman*), align=left}

\newtheorem{theorem}{Theorem}
\newtheorem{lemma}[theorem]{Lemma}

\newtheorem{proposition}[theorem]{Proposition}
\newtheorem{definition}{Definition}
\newtheorem{assumption}{Assumption}
\newtheorem{remark}{Remark}

\newcommand{\R}{\mathbb{R}}
\DeclareMathOperator{\ext}{ext}
\newcommand{\pstar}{p^*}

\algnewcommand{\Input}[1]{%
  \State \textbf{Input:} {\raggedright #1}%
}
\algnewcommand{\Initialize}[1]{%
  \State \textbf{Initialize:}
  \Statex \hspace*{\algorithmicindent}\parbox[t]{.8\linewidth}{\raggedright #1}
}
\algnewcommand{\Output}[1]{%
  \State \textbf{Output:} {\raggedright #1}%
}

\begin{document}

\title{Convergence Rates for $\ell_p$ Norm Minimization in Convex Vector Optimization}

\author{
Mohammed Alshahrani \thanks{Corresponding author. Department of Mathematics, King Fahd University of Petroleum \& Minerals, Dhahran, 31261, Saudi Arabia\\
Interdisciplinary Research Center for Smart Mobility and Logistics, King Fahd University of Petroleum \& Minerals, Dhahran, 31261, Saudi Arabia, (e-mail:{\tt mshahrani@kfupm.edu.sa}).}
}

\maketitle

\begin{abstract}
We analyze convergence rates of norm-minimization-based outer approximation algorithms for convex vector optimization when the scalarization uses an $\ell_p$ norm with $p \in (1,\infty)$. While the Euclidean case ($p=2$) achieves the optimal rate $O(k^{2/(1-q)})$, the behavior under general $\ell_p$ norms has remained open. A direct approach via the modulus of smoothness yields only the weaker exponent $\min(p,2)$, which degrades for $1 < p < 2$. We prove that the Hausdorff approximation error satisfies $\delta_H(P_k, A) = O(k^{2/(1-q)})$ for \emph{every} $p \in (1,\infty)$, where $q$ is the number of objectives and $k$ is the iteration count. The proof introduces a Euclidean intermediary technique that exploits the ambient inner product structure of $\R^q$ to obtain a quadratic bound on the hyperplane distance, bypassing the $\ell_p$ smoothness limitation; norm equivalence then converts this to any $\ell_p$ metric at the cost of only a dimension-dependent constant, not a loss of exponent. Numerical experiments confirm the $p$-independent rate predicted by the theory.
\end{abstract}

\noindent
\textbf{Keywords:} $\ell_p$ norms, convex vector optimization, convergence rate, outer approximation, norm minimization, modulus of smoothness, uniform convexity, Hausdorff distance\\

\medskip

\noindent
\textbf{AMS subject classification:} 90C29, 90C25, 65K05, 52A21, 46B20.

\section{Introduction}\label{sec:intro}

Outer approximation algorithms are a fundamental tool for solving convex vector optimization problems (CVOPs). Given a CVOP with $q$ objectives, these algorithms iteratively construct a sequence of polyhedral outer approximations $\{P_k\}$ to the upper image $\mathcal{P}$, refining the approximation by adding supporting halfspaces at each step. The efficiency of such algorithms depends critically on how quickly the Hausdorff approximation error $\delta_H(P_k, A)$ decreases as the iteration count $k$ grows, where $A$ is a compact convex slice of $\mathcal{P}$.

The norm-minimization-based algorithm introduced by Ararat et al.\ \cite{Ararat2022} selects the next cut by solving a closest-point problem: given a vertex $v$ of $P_k$, find $y^v = \arg\min_{y \in A} \|y - v\|$ and use the gradient of the norm at the residual $z^v = y^v - v$ as the cut normal. This approach is direction-free and produces cuts naturally adapted to the geometry of the upper image.

The convergence rate of this algorithm depends on the choice of norm $\|\cdot\|$ used in the scalarization. For the Euclidean norm ($p = 2$), Ararat et al.\ \cite{Ararat2024} proved the rate $\delta_H(P_k, A) = O(k^{2/(1-q)})$, which matches the optimal rate for polyhedral approximation of smooth convex bodies \cite{Gruber1993,Gruber1993a,Glasauer1997}. The exponent $c = 2$ in this rate (meaning $\delta_H = O(k^{c/(1-q)})$) arises from the quadratic curvature of the Euclidean sphere and is central to the algorithm's efficiency. In a companion paper \cite{Alshahrani2026}, we extended this result to all inner-product norms $\|\cdot\|_M$ (where $M$ is a symmetric positive definite matrix), showing that $c = 2$ for every such norm via an isometry argument.

A natural question, raised in \cite{Alshahrani2026}, is whether the exponent $c = 2$ persists when the scalarization uses a non-inner-product norm. The $\ell_p$ norms with $p \neq 2$ are the most natural candidates to investigate: they form a one-parameter family interpolating between $\ell_1$ and $\ell_\infty$, their geometry (moduli of convexity and smoothness) is precisely understood \cite{Clarkson1936,Hanner1956,Ball1994,Lindenstrauss1996}, and they include both smoother-than-Euclidean ($p > 2$) and rougher-than-Euclidean ($1 < p < 2$) cases.

A direct approach using the modulus of smoothness of $\|\cdot\|_p$ yields the exponent $c(p) = s(p) = \min(p, 2)$, which equals~$2$ for $p \ge 2$ but degrades to $c(p) = p$ for $1 < p < 2$. This suggests that the convergence rate might genuinely depend on $p$, with rougher norms paying a penalty. The main result of this paper shows that this is not the case:

\medskip
\noindent \emph{Main result} (Theorem~\ref{thm:lp-rate}). \textit{For every $p \in (1, \infty)$, the norm-minimization algorithm using $\ell_p$ scalarization satisfies $\delta_H(P_k, A) = O(k^{2/(1-q)})$. The convergence rate exponent $c(p) = 2$ is universal across all $\ell_p$ norms.}
\medskip

The exponent turns out to be determined not by the smoothness of the scalarization norm, but by the Euclidean curvature of the \emph{deviation surface}---a geometric object built from support points and cut normals. We introduce a \emph{Euclidean intermediary} technique that bounds the hyperplane distance via the squared Euclidean norm of the deviation vector difference, bypassing the $\ell_p$ smoothness limitation; see Section~\ref{sec:contributions} for details.

\subsection{Related Work}\label{sec:related}

The outer approximation approach to convex vector optimization originates with the work of Benson \cite{Benson1998} for multi-objective linear programming, extended to the convex case by Ehrgott et al.\ \cite{Ehrgott2011} and L\"ohne et al.\ \cite{Lohne2011,Lohne2014}. These methods typically use Pascoletti--Serafini scalarization, which requires a direction parameter. Ararat et al.\ \cite{Ararat2022} introduced the norm-minimization approach, which eliminates direction bias and admits a natural convergence analysis. Further developments include unbounded problems \cite{Wagner2023}, local upper bounds \cite{Eichfelder2026}, box-coverage methods \cite{Eichfelder2022}, and efficient parameter selection for Pascoletti--Serafini scalarization \cite{Keskin2023}.

The convergence rate framework for polyhedral approximation of convex bodies was developed by Kamenev \cite{Kamenev1992,Kamenev2002} and Lotov et al.\ \cite{Lotov2004}. Their theory is built around the concept of $H(r,A)$-sequences of cutting: if each cut captures at least a fixed fraction $r$ of the remaining error, then the approximation error decays as $O(k^{1/(1-q)})$. This general rate, with exponent $c = 1$, holds for any norm and relies solely on packing estimates on the boundary of $A$. Improving the exponent to $c = 2$ requires geometric lemmas that quantify the relationship between hyperplane distance and the separation of support points. In the Euclidean setting, this was achieved by Lotov et al.\ \cite[Chapter~8]{Lotov2004} and adapted to the CVOP context by Ararat et al.\ \cite[Section~7]{Ararat2024}.

The optimal rate $O(n^{-2/(q-1)})$ for approximating a smooth convex body in $\R^q$ by a polytope with $n$ facets (or vertices) is a classical result in convex geometry, established by Gruber \cite{Gruber1982,Gruber1993,Gruber1993a,Gruber1994}, Glasauer and Gruber \cite{Glasauer1997}, and B\"or\"oczky \cite{Boroczky2000}; see Bronstein \cite{Bronstein2008} and Schneider \cite[Chapter~2]{Schneider2013} for surveys. The exponent $2/(q-1)$ is universal---it depends only on the dimension and not on the specific norm used to measure the approximation error. Our result shows that the same universality holds in the algorithmic setting of norm-minimization-based CVOP.

The moduli of convexity and smoothness of $\ell_p$ spaces were characterized by Clarkson \cite{Clarkson1936} and Hanner \cite{Hanner1956}, with sharp constants established by Ball, Carlen, and Lieb \cite{Ball1994}. These moduli govern the ``power type'' of the norm: $\ell_p$ has smoothness power type $s(p) = \min(p, 2)$ and convexity power type $r(p) = \max(p, 2)$. The Bregman divergence of $\|\cdot\|_p^p$ is bounded by $\|x - y\|_p^{s(p)}$ from above \cite{Xu1991,Sprung2019}, which is the source of the $\min(p,2)$ exponent in the direct approach. Our Euclidean intermediary bypasses this limitation.

\subsection{Contributions}\label{sec:contributions}

The main contributions of this paper are:

\begin{enumerate}[label=(\arabic*)]
\item We prove that $c(p) = 2$ for all $p \in (1,\infty)$ (Theorem~\ref{thm:lp-rate}), establishing that the optimal Euclidean rate $O(k^{2/(1-q)})$ holds for every $\ell_p$ norm. In particular, neither the primal smoothness power type $s(p) = \min(p,2)$ nor the dual convexity power type $r(\pstar)$ determines the exponent, and an earlier conjecture predicting $c(p) = \min(2, p, p/(p-1))$ is refuted.

\item We introduce a proof technique (Lemma~\ref{lem:hyperplane-distance}) that exploits the ambient Euclidean inner product of $\R^q$ to obtain quadratic bounds on the hyperplane distance, bypassing the $\ell_p$-specific smoothness limitation that yields only $c(p) = \min(p,2)$.

\item We develop $\ell_p$ analogs of the geometric lemmas of Ararat et al.\ \cite[Section~7]{Ararat2024}---hyperplane distance bounds (Lemma~\ref{lem:hyperplane-distance}), deviation vector separation (Lemma~\ref{lem:separation}), strict convexity counting (Lemma~\ref{lem:strict-convexity}), and packing estimates (Lemma~\ref{lem:packing})---within the deviation vector framework of Lotov et al.\ \cite[Chapter~8]{Lotov2004}.

\item Experiments across $p \in \{1.25, 1.5, 2, 3, 4, 8\}$ and $q \in \{2, 3\}$ confirm the $p$-independent rate predicted by the theory.
\end{enumerate}

\subsection{Organization}\label{sec:organization}

The paper is organized as follows. Section~\ref{sec:prelim} establishes notation and recalls the convex vector optimization setting, $\ell_p$ norm properties, and the $H$-sequence convergence framework. Section~\ref{sec:lp-lemmas} develops the $\ell_p$ geometric lemmas, including the Euclidean intermediary bound (Lemma~\ref{lem:hyperplane-distance}) and the deviation vector separation estimate (Lemma~\ref{lem:separation}). Section~\ref{sec:main-results} states and proves the main theorem (Theorem~\ref{thm:lp-rate}) and discusses its implications. Section~\ref{sec:algorithm} describes the algorithm and its implementation for $\ell_p$ scalarization. Section~\ref{sec:experiments} presents numerical experiments confirming the theoretical predictions. Section~\ref{sec:conclusion} concludes with a discussion of open problems.

\section{Preliminaries}\label{sec:prelim}

We introduce the notation and background needed for the convergence analysis. Section~\ref{sec:cvop} recalls the convex vector optimization setting, Section~\ref{sec:lp-duality} defines the $\ell_p$ norms and their duals, Section~\ref{sec:lp-geometry} collects key geometric properties, and Section~\ref{sec:h-sequences} reviews the $H$-sequence convergence framework.

\subsection{Convex Vector Optimization}\label{sec:cvop}

We consider the convex vector optimization problem
\begin{equation}\label{P}
\tag{P}
\min \Gamma(x) \ \text{w.r.t. }\le_C \quad\text{s.t. } x\in X,
\end{equation}
where $C \subset \R^q$ is a closed, solid, pointed, polyhedral cone, $X \subset \R^n$ is a nonempty compact convex set, and $\Gamma: X \to \R^q$ is continuous and $C$-convex. The \emph{upper image} of \eqref{P} is $P := \Gamma(X) + C$.

\begin{assumption}[Standing hypotheses]\label{ass:standing}
Throughout this paper, we assume:
\begin{enumerate}
\item[(a)] $C\subset\R^q$ is a closed, solid, pointed, nontrivial, polyhedral cone.
\item[(b)] $X\subset\R^n$ is a nonempty compact convex set with $\operatorname{int} X\neq\emptyset$.
\item[(c)] $\Gamma:X\to\R^q$ is continuous and $C$-convex.
\end{enumerate}
Under these conditions, $P$ is a closed convex set with $\operatorname{rec}(P) = C$. We work with a compact convex \emph{slice} $A$ of the upper image (obtained by intersecting $P$ with a compact slab), so that $A$ has nonempty interior and finite diameter. For details on the algorithm and the slicing construction, see \cite{Ararat2022,Ararat2024}.
\end{assumption}

\subsection{$\ell_p$ Norms and Duality}\label{sec:lp-duality}

The algorithm analyzed in this paper uses the $\ell_p$ norm as the scalarization metric. We recall the basic definitions.

\begin{definition}[$\ell_p$ norm]
For $p \in [1, \infty]$ and $y \in \R^q$, the $\ell_p$ norm is defined as:
\[
\|y\|_p := \begin{cases}
\left( \sum_{i=1}^q |y_i|^p \right)^{1/p} & \text{if } p \in [1, \infty), \\
\max_{i=1,\ldots,q} |y_i| & \text{if } p = \infty.
\end{cases}
\]
\end{definition}

\begin{definition}[Dual norm]
The dual norm of $\|\cdot\|_p$ is $\|\cdot\|_{\pstar}$ where $\frac{1}{p} + \frac{1}{\pstar} = 1$.
\end{definition}

\subsection{$\ell_p$ Geometry}\label{sec:lp-geometry}

We collect the key geometric properties of $\ell_p$ norms that underlie the convergence analysis. Throughout, we fix $p \in (1,\infty)$ and denote the conjugate exponent by $\pstar := p/(p-1)$, so that $1/p + 1/\pstar = 1$. We write $S_p^{q-1} := \{y \in \R^q : \|y\|_p = 1\}$ for the $\ell_p$ unit sphere.

\begin{definition}[Modulus of convexity]\label{def:mod-convexity}
The \emph{modulus of convexity} of the norm $\|\cdot\|_p$ on $\R^q$ is
\[
\delta_p(\varepsilon) := \inf\left\{ 1 - \left\|\frac{x+y}{2}\right\|_p : \|x\|_p = \|y\|_p = 1,\; \|x-y\|_p \ge \varepsilon \right\}, \quad \varepsilon \in [0, 2].
\]
\end{definition}

\begin{definition}[Modulus of smoothness]\label{def:mod-smoothness}
The \emph{modulus of smoothness} of the norm $\|\cdot\|_p$ on $\R^q$ is
\[
\rho_p(\tau) := \sup\left\{ \frac{\|x+\tau y\|_p + \|x-\tau y\|_p}{2} - 1 : \|x\|_p = \|y\|_p = 1 \right\}, \quad \tau \ge 0.
\]
\end{definition}

\begin{proposition}[Sharp $\ell_p$ moduli {\cite{Hanner1956,Ball1994}}]\label{prop:lp-moduli}
For all $q \ge 2$, the $\ell_p$ norms on $\R^q$ satisfy the following sharp bounds.
\begin{enumerate}
\item There exists a constant $S_p > 0$ such that $\rho_p(\tau) \le S_p \, \tau^{s(p)}$ for all $\tau \ge 0$, where
\begin{equation}\label{eq:smoothness-power}
s(p) := \min(p, 2) = \begin{cases} p & \text{if } 1 < p \le 2, \\ 2 & \text{if } p \ge 2. \end{cases}
\end{equation}
Specifically:
\begin{itemize}
\item For $1 < p \le 2$: $\rho_p(\tau) \le \frac{1}{p}\,\tau^p$ \;\cite{Hanner1956}.
\item For $p \ge 2$: $\rho_p(\tau) \le \frac{p-1}{2}\,\tau^2$ \;\cite{Ball1994}.
\end{itemize}
\item There exists a constant $K_p > 0$ such that $\delta_p(\varepsilon) \ge K_p\, \varepsilon^{r(p)}$ for all $\varepsilon \in [0, 2]$, where
\begin{equation}\label{eq:convexity-power}
r(p) := \max(p, 2) = \begin{cases} 2 & \text{if } 1 < p \le 2, \\ p & \text{if } p \ge 2. \end{cases}
\end{equation}
Specifically:
\begin{itemize}
\item For $1 < p \le 2$: $\delta_p(\varepsilon) \ge \frac{p-1}{8}\,\varepsilon^2$ \;\cite{Hanner1956,Clarkson1936}.
\item For $p \ge 2$: $\delta_p(\varepsilon) \ge \frac{1}{p\cdot 2^p}\,\varepsilon^p$ \;\cite{Hanner1956}.
\end{itemize}
\end{enumerate}
\end{proposition}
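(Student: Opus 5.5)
The proposition collects classical facts, so I would not re-derive the underlying sharp inequalities. Each of the four bounds follows in a line or two from one known two-point inequality for $\|\cdot\|_p$, combined with an elementary scalar estimate: the power-mean inequality, or $(1+t)^{\alpha}\le 1+\alpha t$ for $\alpha\in(0,1]$ and $t\ge -1$. Since the relevant inequalities hold in $\ell_p$ of any dimension, the constants are independent of $q$. In every case I fix $\|x\|_p=\|y\|_p=1$ as in Definitions~\ref{def:mod-convexity} and~\ref{def:mod-smoothness}.

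\emph{Smoothness, $1<p\le 2$.} By the power-mean inequality,
\[
\frac{\|x+\tau y\|_p+\|x-\tau y\|_p}{2}\le\Bigl(\frac{\|x+\tau y\|_p^p+\|x-\tau y\|_p^p}{2}\Bigr)^{1/p}.
\]
I then apply the scalar inequality $\frac{|a+b|^p+|a-b|^p}{2}\le |a|^p+|b|^p$, valid for $p\le 2$, coordinatewise and sum. This bounds the right side by $(1+\tau^p)^{1/p}\le 1+\tau^p/p$, which gives $\rho_p(\tau)\le \tau^p/p$. The scalar inequality follows from concavity of $t\mapsto t^{p/2}$ after reducing to $a=1$, $|b|\le 1$ by symmetry and homogeneity; equivalently, it is a special case of Hanner's inequality.

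\emph{Smoothness, $p\ge 2$.} I invoke the sharp two-uniform smoothness inequality of Ball--Carlen--Lieb:
\[
\Bigl(\tfrac{\|u+v\|_p^p+\|u-v\|_p^p}{2}\Bigr)^{2/p}\le \|u\|_p^2+(p-1)\|v\|_p^2 .
\]
With $u=x$, $v=\tau y$, the power-mean inequality gives that the average of $\|x\pm\tau y\|_p$ is at most $\sqrt{1+(p-1)\tau^2}\le 1+\frac{p-1}{2}\tau^2$.

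\emph{Convexity, $p\ge 2$.} I use Clarkson's inequality $\|\tfrac{x+y}{2}\|_p^p+\|\tfrac{x-y}{2}\|_p^p\le \tfrac12(\|x\|_p^p+\|y\|_p^p)=1$. Since $\|x-y\|_p\ge\varepsilon$, this gives $\|\tfrac{x+y}{2}\|_p\le (1-(\varepsilon/2)^p)^{1/p}\le 1-\frac{\varepsilon^p}{p\,2^p}$.

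\emph{Convexity, $1<p\le 2$.} I use the dual (two-uniform convexity) form of the Ball--Carlen--Lieb inequality:
\[
\|\tfrac{x+y}{2}\|_p^2+(p-1)\|\tfrac{x-y}{2}\|_p^2\le \tfrac12(\|x\|_p^2+\|y\|_p^2)=1 .
\]
This gives $\|\tfrac{x+y}{2}\|_p\le\sqrt{1-(p-1)\varepsilon^2/4}\le 1-\frac{p-1}{8}\varepsilon^2$.

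The main obstacle is the pair of Ball--Carlen--Lieb inequalities with the sharp constant $p-1$. These are genuinely nontrivial: the proof goes through a two-point inequality and a hypercontractivity-type argument. I would take them as given from \cite{Ball1994} rather than reprove them. All remaining steps are the routine scalar estimates above, and the existence statements with $S_p=\max(1/p,(p-1)/2)$ and $K_p=\min\bigl(\frac{p-1}{8},\frac{1}{p2^p}\bigr)$ follow immediately in each regime.
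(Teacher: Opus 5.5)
Your proposal is correct. It also does more than the paper: the paper gives no proof and simply attributes the four bounds to Hanner, Clarkson, and Ball--Carlen--Lieb. You reduce each bound to a single two-point inequality followed by a Bernoulli or power-mean estimate, and your sources differ from the paper's attributions in useful ways.

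\begin{itemize}
\item The $1<p\le 2$ smoothness bound follows from an elementary coordinatewise inequality, with no appeal to Hanner.
\item The $p\ge 2$ convexity bound needs only Clarkson's inequality.
\item For the $1<p\le 2$ convexity bound you use the $2$-uniform convexity inequality of Ball--Carlen--Lieb. This is the natural source of the constant $(p-1)/8$ on all of $[0,2]$, since Clarkson's inequality for $1<p\le 2$ only gives power type $\pstar$.
\end{itemize}

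What your route buys is that every constant can be checked and its independence of $q$ is visible. The paper's citation is shorter but leaves the reader to locate these exact constants in the literature.

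Two minor remarks. First, your aside that $\frac{|a+b|^p+|a-b|^p}{2}\le |a|^p+|b|^p$ is ``a special case of Hanner's inequality'' is not right: for real scalars Hanner's inequality is an identity, so it cannot yield this. Your concavity argument, however, proves the inequality on its own.

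Second, you do not address the word ``sharp'' or the hypothesis $q\ge 2$, which matter only for optimality. For instance:
\begin{itemize}
\item when $1<p\le 2$, taking $x=e_1$, $y=e_2$ gives $\rho_p(\tau)\ge(1+\tau^p)^{1/p}-1\sim\tau^p/p$;
\item when $p\ge 2$, taking $x=(a,b,0,\ldots,0)$, $y=(a,-b,0,\ldots,0)$ with $a^p+b^p=1$ and $2b=\varepsilon$ attains exactly $1-(1-(\varepsilon/2)^p)^{1/p}$, your Clarkson bound before the Bernoulli step.
\end{itemize}
The paper does not prove sharpness either.
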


\begin{remark}[Duality between smoothness and convexity]\label{rem:duality}
The moduli of the dual pair $(\ell_p, \ell_{\pstar})$ are linked: the norm $\|\cdot\|_p$ is $s(p)$-uniformly smooth if and only if $\|\cdot\|_{\pstar}$ is $r(\pstar)$-uniformly convex, with $r(\pstar) = s(p)^* := s(p)/(s(p)-1)$. In particular:
\begin{itemize}
\item $\ell_p$ ($p \ge 2$) is $2$-uniformly smooth $\Longleftrightarrow$ $\ell_{\pstar}$ ($\pstar \le 2$) is $2$-uniformly convex.
\item $\ell_p$ ($1 < p \le 2$) is $p$-uniformly smooth $\Longleftrightarrow$ $\ell_{\pstar}$ ($\pstar \ge 2$) is $\pstar$-uniformly convex.
\end{itemize}
\end{remark}

\begin{proposition}[$\ell_p$ subdifferential and gradient]\label{prop:lp-gradient}
For $p \in (1,\infty)$ and $z \in \R^q \setminus \{0\}$, the $\ell_p$ norm is differentiable with gradient
\begin{equation}\label{eq:lp-gradient}
\nabla \|z\|_p = \frac{1}{\|z\|_p^{p-1}} \left( |z_1|^{p-1}\operatorname{sgn}(z_1),\; \ldots,\; |z_q|^{p-1}\operatorname{sgn}(z_q) \right).
\end{equation}
Moreover, $\|\nabla \|z\|_p\|_{\pstar} = 1$ for all $z \neq 0$, so the gradient lies on the dual unit sphere $S_{\pstar}^{q-1}$.
\end{proposition}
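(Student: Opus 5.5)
We prove the two claims of Proposition~\ref{prop:lp-gradient} in turn: first the formula \eqref{eq:lp-gradient} by direct differentiation, then the identity $\|\nabla\|z\|_p\|_{\pstar}=1$ by direct evaluation.

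\textbf{Step 1: differentiability of $f(z) := \|z\|_p^p = \sum_{i}|z_i|^p$.} Each summand $t\mapsto |t|^p$ is continuously differentiable on $\R$ when $p>1$, with derivative $p|t|^{p-1}\operatorname{sgn}(t)$. At $t=0$ the difference quotient is $|h|^{p-1}\operatorname{sgn}(h)$, which tends to $0$ because $p-1>0$. Hence $f$ is $C^1$ on $\R^q$ and
\[
\partial_i f(z) = p\,|z_i|^{p-1}\operatorname{sgn}(z_i).
\]

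\textbf{Step 2: differentiability of the norm.} For $z\neq 0$ we have $f(z)>0$. The map $s\mapsto s^{1/p}$ is smooth on $(0,\infty)$, so by the chain rule $\|z\|_p = f(z)^{1/p}$ is differentiable at $z$, with
\[
\partial_i \|z\|_p = \tfrac1p f(z)^{1/p-1}\,\partial_i f(z) = \|z\|_p^{1-p}\,|z_i|^{p-1}\operatorname{sgn}(z_i).
\]
This is exactly \eqref{eq:lp-gradient}. The only point needing care is the coordinates with $z_i=0$, and these were handled in Step~1. Since the norm is convex and differentiable at $z$, its subdifferential at $z$ is the singleton $\{\nabla\|z\|_p\}$.

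\textbf{Step 3: the dual norm of the gradient.} Write $g := \nabla\|z\|_p$ and use $\pstar(p-1)=p$. Then
\[
\|g\|_{\pstar}^{\pstar} = \|z\|_p^{-(p-1)\pstar}\sum_i |z_i|^{(p-1)\pstar} = \|z\|_p^{-p}\sum_i |z_i|^p = 1,
\]
so $g\in S_{\pstar}^{q-1}$.

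As a sanity check, Euler's identity gives $\langle g, z\rangle = \|z\|_p^{1-p}\sum_i |z_i|^p = \|z\|_p$. By H\"older's inequality this is the equality case $\langle g, z\rangle = \|g\|_{\pstar}\|z\|_p$, which is consistent with the standard characterization of the subdifferential of a norm at $z\neq0$, namely $\{g : \|g\|_{\pstar}=1,\ \langle g,z\rangle=\|z\|_p\}$.

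No step is a real obstacle. The one place where $p>1$ is genuinely used is differentiability at coordinates where $z_i=0$ (Step~1); for $p=1$ this step fails and the gradient formula breaks down.
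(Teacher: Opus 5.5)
Your proof is correct and follows the same route as the paper: the chain rule applied to $\|z\|_p = (\sum_i |z_i|^p)^{1/p}$, then the direct evaluation $\|\nabla\|z\|_p\|_{\pstar}^{\pstar} = \|z\|_p^{-p}\sum_i |z_i|^p = 1$ using $(p-1)\pstar = p$. Your handling of the coordinates with $z_i = 0$, where $p>1$ is what makes $t \mapsto |t|^p$ continuously differentiable, is a detail the paper's one-line proof leaves implicit.
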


\begin{proof}
Direct computation from the chain rule applied to $\|z\|_p = (\sum_i |z_i|^p)^{1/p}$. The dual norm identity follows from H\"older's inequality:
\[
\|\nabla \|z\|_p\|_{\pstar}^{\pstar} = \frac{1}{\|z\|_p^{p}} \sum_{i=1}^q |z_i|^{p} = 1. \qedhere
\]
\end{proof}

\begin{proposition}[Hanner's inequalities {\cite{Hanner1956}}]\label{prop:hanner}
For $p \ge 2$ and all $x, y \in \R^q$:
\begin{equation}\label{eq:hanner-pge2}
\|x+y\|_p^p + \|x-y\|_p^p \ge \bigl(\|x\|_p + \|y\|_p\bigr)^p + \bigl|\|x\|_p - \|y\|_p\bigr|^p.
\end{equation}
For $1 < p \le 2$ and all $x, y \in \R^q$:
\begin{equation}\label{eq:hanner-ple2}
\|x+y\|_p^p + \|x-y\|_p^p \le \bigl(\|x\|_p + \|y\|_p\bigr)^p + \bigl|\|x\|_p - \|y\|_p\bigr|^p.
\end{equation}
The reversed inequalities hold for the dual exponent $\pstar$.
\end{proposition}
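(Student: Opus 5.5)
The plan is to follow Hanner's original strategy: reduce the vector inequality to a scalar inequality for two real numbers, and then sum coordinatewise. By homogeneity and the symmetry $y \mapsto -y$ and $x \leftrightarrow y$, we may normalize $\|x\|_p = 1 \ge \|y\|_p = t \in [0,1]$. For $r \in [0,1]$ introduce Hanner's auxiliary functions
\[
\alpha(r) := (1+r)^{p-1} + (1-r)^{p-1}, \qquad \beta(r) := \bigl[(1+r)^{p-1} - (1-r)^{p-1}\bigr] r^{1-p}.
\]
The key scalar fact is the variational identity
\[
(1+t)^p + (1-t)^p = \max_{r\in[0,1]} \ \text{or} \ \min_{r\in[0,1]} \bigl[\alpha(r) + \beta(r) t^p\bigr],
\]
with $\max$ or $\min$ according to whether $p \le 2$ or $p \ge 2$, attained at $r = t$. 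This is proved by differentiating in $r$: one has $\partial_r[\alpha(r)+\beta(r)t^p] = (p-1)\bigl(\ldots\bigr)(t^p - r^p)$ up to a sign depending on $p-2$. Together with the pointwise inequality $|a+b|^p + |a-b|^p \gtrless \alpha(r)|a|^p + \beta(r)|b|^p$ for real $a,b$ and each $r\in[0,1]$, summation over coordinates gives
\[
\|x+y\|_p^p + \|x-y\|_p^p \ \gtrless\ \alpha(r)\|x\|_p^p + \beta(r)\|y\|_p^p .
\]
Choosing $r = t$ then converts the right-hand side into $(1+t)^p + (1-t)^p$, which is exactly the right-hand side of \eqref{eq:hanner-pge2}/\eqref{eq:hanner-ple2} after normalization.

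I would carry out the steps in this order:
\begin{enumerate}
\item Prove the two-point inequality for real $a,b$. Reduce to $b = 1$, $a = s \ge 0$, and study the one-variable function $g(s) = |s+1|^p + |s-1|^p - \alpha(r)s^p - \beta(r)$. Its sign is controlled by comparing at $s = 1/r$ and using convexity or concavity of $u \mapsto u^{p-1}$ according to $p \gtrless 2$.
\item Sum over coordinates.
\item Substitute $r = t$.
\end{enumerate}
The dual statement for $\pstar$ then follows by applying the same argument with $p$ replaced by $\pstar$, which lies on the other side of $2$.

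The main obstacle, and the step I would check first before any computation, is the \emph{direction} of the pointwise inequality in step (i), because everything downstream inherits it. The stated directions must survive simple extremal test cases. I would test $p=4$, $x = e_1$, $y = e_2$ in \eqref{eq:hanner-pge2}. There $\|x\pm y\|_4^4 = 2$, so the left side is $4$, while the right side is $2^4 + 0 = 16$. Hence the inequality as printed fails for this pair, and the sign analysis in step (i) will necessarily come out reversed: $\le$ for $p\ge2$ and $\ge$ for $1<p\le2$.

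So the plan as executed establishes the inequalities with the directions exchanged. To obtain the statement literally as worded, the only available route is to correct the direction labels ($p\ge2$ versus $p\le2$) in \eqref{eq:hanner-pge2}--\eqref{eq:hanner-ple2}. Any proof of the printed directions must fail at step (i), and this can be pinpointed there.
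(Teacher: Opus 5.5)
Your diagnosis is correct, and it is the main point here. The paper gives no proof of this proposition; it only cites Hanner, and the printed directions are swapped relative to Hanner's theorem. Your test pair settles both halves at once. For $x=e_1$, $y=e_2$ (available since $q\ge 2$), the left-hand side is $4$ and the right-hand side is $2^p$. So the printed inequality fails for every $p>2$, and in the opposite direction for every $1<p<2$. Only $p=2$, where the parallelogram law gives equality, is compatible with both readings. The correct statement is ``$\ge$'' for $1<p\le 2$ and ``$\le$'' for $p\ge 2$. The closing sentence about $\pstar$ is then just the same theorem read on the other side of $2$. The misprint does not affect the rest of the paper: Hanner's inequalities are only background for Proposition~\ref{prop:lp-moduli} and are never used in Lemma~\ref{lem:hyperplane-distance} or Theorem~\ref{thm:lp-rate}.

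Your route is Hanner's own argument (also the one in Lieb--Loss), and it proves the corrected statement. There are three things to tighten when you write it out.

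\emph{The max/min identity is unnecessary.} After summing over coordinates you only use the algebraic identity $\alpha(t)+\beta(t)t^p=(1+t)^p+(1-t)^p$. The identity you state is nothing but the two-point inequality at $(a,b)=(1,t)$.

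\emph{The two-point inequality must also cover coordinates with $|a|<|b|$.} This case arises even though $\|x\|_p\ge\|y\|_p$. In your normalization it is $s\in[0,1)$, which the comparison at $s=1/r$ does not reach.
For $s>1$ one has $g'(s)=p\,s^{p-1}[\alpha(1/s)-\alpha(r)]$. Since $\alpha$ is monotone on $[0,1]$ (decreasing for $p<2$, increasing for $p>2$), the zero $g(1/r)=0$ is the global minimum (resp.\ maximum) of $g$ on $[1,\infty)$.
For $0<s<1$ one has $g'(s)=p\,s^{p-1}[\beta(s)-\alpha(r)]$. When $p<2$, $\beta$ increases to $\beta(1)=2^{p-1}$ while $\alpha(r)\ge\alpha(1)=2^{p-1}$. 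Hence $g$ is decreasing on $[0,1]$ and $g(s)\ge g(1)\ge 0$, with all signs reversed for $p>2$.
Equivalently, use the symmetry of $|a+b|^p+|a-b|^p$ under $a\leftrightarrow b$, together with $\alpha\ge\beta$ for $p\le 2$ and $\beta\ge\alpha$ for $p\ge 2$.

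\emph{Handle $y=0$ separately.} When $p>2$, $\beta(r)\to\infty$ as $r\to 0$, so the substitution $r=t$ breaks down there; that case is an equality.
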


\begin{remark}[Role of Hanner's inequalities]\label{rem:hanner-role}
Hanner's inequalities are the source of the sharp moduli bounds in Proposition~\ref{prop:lp-moduli} and are included here for completeness. A direct application of these moduli to bound the hyperplane distance yields the exponent $c(p) = \min(p,2)$ (see Remark~\ref{rem:direct-lp}). The Euclidean intermediary technique of Lemma~\ref{lem:hyperplane-distance} bypasses this route, achieving $c(p) = 2$ for all $p$.
\end{remark}

\subsection{$H$-Sequences and Convergence Rates}\label{sec:h-sequences}

We recall the convergence rate framework for polyhedral approximation of compact convex bodies, originally developed by Kamenev~\cite{Kamenev1992} and Lotov et al.~\cite{Lotov2004} and adapted to the convex vector optimization setting by Ararat et al.~\cite{Ararat2024}. We follow the notation and presentation of \cite[Section~6]{Ararat2024}, which itself draws on \cite[Chapter~8]{Lotov2004}.

Throughout, let $A \subset \R^q$ be a compact convex set with $\operatorname{int} A \neq \emptyset$. For a direction $w \in S_{\pstar}^{q-1}$, define the \emph{support function} $h_A(w) := \max\{\langle w, y \rangle : y \in A\}$, the \emph{supporting halfspace} $H(w, A) := \{y \in \R^q : \langle w, y \rangle \le h_A(w)\}$, and the \emph{supporting hyperplane} $l(w, A) := \{y \in \R^q : \langle w, y \rangle = h_A(w)\}$. For two compact convex sets $A, B \subset \R^q$, the \emph{Hausdorff distance} (with respect to $\|\cdot\|_p$) is
\[
\delta_H(A, B;\, \|\cdot\|_p) := \max\Bigl\{\max_{a \in A} \min_{b \in B} \|a - b\|_p,\; \max_{b \in B} \min_{a \in A} \|a - b\|_p\Bigr\}.
\]
When the norm is clear from context, we write $\delta_H(A, B)$.

We define the \emph{inradius} of $A$ (with respect to $\|\cdot\|_p$) as
\begin{equation}\label{eq:inradius}
r(A) := \sup\{r > 0 : \exists\, c \in A \text{ such that } c + r\,\overline{B}_p \subseteq A\},
\end{equation}
where $\overline{B}_p := \{y \in \R^q : \|y\|_p \le 1\}$ is the closed $\ell_p$ unit ball. The \emph{circumradius} $R(A) := \inf\{R > 0 : A \subseteq c + R\,\overline{B}_p \text{ for some } c \in \R^q\}$ and the \emph{asphericity} $\omega(A) := R(A)/r(A) \ge 1$ will also appear in our bounds.

The norm-minimization algorithm of \cite{Ararat2022} generates a sequence of outer approximating polytopes $\{P_k\}_{k \ge 0}$ with $P_k \supseteq A$ for all $k$. At each iteration $k$, a \emph{farthest vertex} $v^k \in \ext(P_k)$ is selected---the one maximizing $\|z^v\|_p$ over all vertices $v$ of $P_k$, where $z^v := y^v - v$ and $y^v := \arg\min_{y \in A} \|y - v\|_p$---and the new halfspace $H(w^k, A) \supseteq A$ is used to cut: $P_{k+1} := P_k \cap H(w^k, A)$, with $w^k := \nabla\|z^{v^k}\|_p \in S_{\pstar}^{q-1}$.

The convergence theory rests on the following property of the generated sequence.

\begin{definition}[$H(r,A)$-sequence of cutting {\cite[Definition~8.3]{Lotov2004}, \cite[Definition~6.2]{Ararat2024}}]\label{def:h-sequence}
Let $A \subset \R^q$ be a nonempty compact convex set and let $(P_k)_{k \ge 0}$ be a sequence of polytopes with $P_0 \supseteq A$ and $P_0 = \bigcap_{i=1}^{I} H(\omega^i, A)$ for some $\omega^1, \ldots, \omega^I \in S_{\pstar}^{q-1}$. We say that $(P_k)_{k \ge 0}$ is generated by a \emph{cutting method} if, for every $k \ge 0$, it holds $P_k \supseteq A$ and there exists a supporting halfspace $H_k = H(w^k, A)$ of $A$ such that $P_{k+1} = P_k \cap H_k$. In this case, $(P_k)_{k \ge 0}$ is called an $H(r, A)$-sequence of cutting for a given constant $r > 0$ if, for every $k \ge 0$, it holds
\begin{equation}\label{eq:h-sequence-property}
\delta_H(P_k, P_{k+1}) \ge r\,\delta_H(P_k, A).
\end{equation}
\end{definition}

The condition \eqref{eq:h-sequence-property} states that each cut captures at least a fixed fraction of the remaining approximation error. This constant-fraction progress property underlies the asymptotic optimality of adaptive cutting schemes \cite[Section~5]{Lotov2004}.

\begin{proposition}[The algorithm generates an $H$-sequence {\cite[Corollary~6.5]{Ararat2024}}]\label{prop:algorithm-h-sequence}
Under Assumption~\ref{ass:standing}, the sequence of outer approximating polytopes $\{P_k\}_{k \ge 0}$ generated by the norm-minimization algorithm of~\cite{Ararat2022} using $\|\cdot\|_p$ scalarization (for any $p \in (1,\infty)$) is an $H(1, A)$-sequence of cutting. In particular, for every $k \ge 0$:
\begin{equation}\label{eq:hausdorff-step}
\delta_H(P_k, P_{k+1}) = \delta_H(P_k, A) = \|z^{v^k}\|_p,
\end{equation}
where $v^k$ is the farthest vertex selected at iteration $k$.
\end{proposition}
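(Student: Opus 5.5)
The plan is to prove the two assertions of Proposition~\ref{prop:algorithm-h-sequence} separately. First, $\delta_H(P_k, A) = \|z^{v^k}\|_p$. Second, $\delta_H(P_k,P_{k+1}) \ge \|z^{v^k}\|_p$, with equality.

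\emph{Step 1: $\delta_H(P_k,A)=\|z^{v^k}\|_p$.} Since $A \subseteq P_k$, the term $\max_{a\in A}\min_{b\in P_k}\|a-b\|_p$ vanishes. Hence $\delta_H(P_k,A)=\max_{x\in P_k} d_p(x,A)$, where $d_p(\cdot,A)$ denotes the $\ell_p$ distance to $A$. This function is convex because $A$ is convex. A convex function on a polytope attains its maximum at a vertex, so the maximum equals $\max_{v\in\ext(P_k)}\|z^v\|_p$. This is exactly $\|z^{v^k}\|_p$ by the farthest-vertex rule.

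\emph{Step 2: the cut removes $v^k$ by exactly $\|z^{v^k}\|_p$.} Write $v=v^k$, $y=y^v$, $z=y-v\neq 0$ (if $z=0$ the algorithm has stopped), and $w=\nabla\|z\|_p$. By Proposition~\ref{prop:lp-gradient}, $\|w\|_{\pstar}=1$ and $\langle w,z\rangle=\|z\|_p$. The optimality condition for $\min_{a\in A}\|a-v\|_p$ at $y$ gives $\langle w, a-y\rangle\ge 0$ for all $a\in A$. Here I will work with the normal $-w$, or flip signs according to the paper's convention, so that $H(w,A)$ contains $A$ and its boundary passes through $y$. Then $h_A(w)=\langle w,y\rangle$ in the appropriate orientation, and $v$ violates the halfspace by $\langle w,z\rangle=\|z\|_p$.

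Now take any $x\in P_{k+1}\subseteq H(w,A)$. H\"older's inequality gives
\[
\|v-x\|_p \ \ge\ \langle w, x-v\rangle \ \ge\ \langle w, y - v\rangle = \|z\|_p .
\]
Here the second inequality holds because $x$ lies on the $A$-side of the hyperplane through $y$. Therefore $\min_{x\in P_{k+1}}\|v-x\|_p\ge\|z\|_p$.

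\emph{Step 3: conclude.} Since $v\in P_k$, Step 2 gives $\delta_H(P_k,P_{k+1})\ge\max_{x\in P_k}d_p(x,P_{k+1})\ge\|z\|_p$. Conversely, $A\subseteq P_{k+1}\subseteq P_k$ implies $d_p(x,P_{k+1})\le d_p(x,A)$, so $\delta_H(P_k,P_{k+1})\le\delta_H(P_k,A)=\|z\|_p$. This yields the equality chain \eqref{eq:hausdorff-step}, and in particular \eqref{eq:h-sequence-property} with $r=1$.

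The main obstacle is Step~2: justifying the variational inequality for the $\ell_p$ projection, which is not orthogonal. I would derive it from first-order optimality for the convex differentiable function $a\mapsto\|a-v\|_p$ on $A$, which is differentiable at $y$ since $y\neq v$ by Proposition~\ref{prop:lp-gradient}. I would also fix the sign convention so that the cut normal agrees with the definition of $H(w,A)$.
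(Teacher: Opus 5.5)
Your proof is correct and follows the same route as the paper's sketch. The sketch says the Hausdorff distance is attained at a vertex and that the cut through $y^{v^k}$ removes the farthest vertex; your H\"older bound $d_p(v^k,P_{k+1})\ge\|z^{v^k}\|_p$ and the sandwich $\delta_H(P_k,P_{k+1})\le\delta_H(P_k,A)$ supply exactly the details it leaves implicit. Your sign flip is also right: with $h_A(w)=\max_{a\in A}\langle w,a\rangle$, the cut must be $H(-w^k,A)=\{x:\langle w^k,x\rangle\ge\langle w^k,y^{v^k}\rangle\}$, which matches the variational inequality used in Lemma~\ref{lem:hyperplane-distance}.
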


\begin{proof}[Proof sketch]
The equality $\delta_H(P_k, A) = \max_{v \in \ext(P_k)} \|z^v\|_p$ follows from the fact that $A \subseteq P_k$ and the Hausdorff distance is attained at a vertex (see \cite[Lemma~5.3]{Ararat2022}). Since the algorithm selects $v^k$ as the farthest vertex, the cut $H_k = H(w^k, A)$ removes precisely the vertex that attains the Hausdorff distance, giving $\delta_H(P_k, P_{k+1}) = \|z^{v^k}\|_p = \delta_H(P_k, A)$. In particular, the $H$-sequence condition \eqref{eq:h-sequence-property} holds with $r = 1$.
\end{proof}

We can now state the general convergence results.

\begin{theorem}[Convergence of $H$-sequences {\cite[Theorem~8.5]{Lotov2004}, \cite[Theorem~6.6]{Ararat2024}}]\label{thm:h-convergence}
Let $A \subset \R^q$ be a nonempty compact convex set and let $(P_k)_{k \ge 0}$ be an $H(r, A)$-sequence of cutting for some $r > 0$. Then
\[
\lim_{k \to \infty} \delta_H(P_k, A) = 0.
\]
\end{theorem}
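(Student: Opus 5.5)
The plan is a compactness (packing) argument by contradiction, in the spirit of Kamenev's original proof.

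\emph{Monotonicity.} First I note that the error sequence $\varepsilon_k := \delta_H(P_k, A)$ is nonincreasing. Since $A \subseteq P_{k+1} \subseteq P_k$, the term $\max_{a\in A}\min_{b\in P_k}\|a-b\|_p$ vanishes. Hence $\varepsilon_k = \sup_{x \in P_k} \operatorname{dist}_p(x, A)$, and this supremum can only shrink as $P_k$ shrinks. So $\varepsilon_k$ converges to some $\varepsilon \ge 0$. For the same reason $\delta_H(P_k, P_{k+1}) = \sup_{x\in P_k}\operatorname{dist}_p(x,P_{k+1})$. I will assume $\varepsilon_0 < \infty$, i.e.\ $P_0$ lies within finite Hausdorff distance of $A$. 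This is implicit in the definition, since otherwise \eqref{eq:h-sequence-property} is vacuous; in the algorithmic setting $P_0$ is a polytope.

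\emph{Extracting a separated sequence.} Suppose, for contradiction, that $\varepsilon > 0$, so $\varepsilon_k \ge \varepsilon$ for all $k$. By \eqref{eq:h-sequence-property}, $\sup_{x\in P_k}\operatorname{dist}_p(x,P_{k+1}) \ge r\varepsilon_k \ge r\varepsilon$. Working with suprema avoids any attainment issue for possibly unbounded sets, so I pick $x^k \in P_k$ with $\operatorname{dist}_p(x^k, P_{k+1}) \ge r\varepsilon/2$. For any $j > k$ we have $x^j \in P_j \subseteq P_{k+1}$, hence $\|x^k - x^j\|_p \ge \operatorname{dist}_p(x^k, P_{k+1}) \ge r\varepsilon/2$. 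Thus $(x^k)$ is an infinite $r\varepsilon/2$-separated sequence.

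\emph{Boundedness and contradiction.} Each $x^k \in P_k$ satisfies $\operatorname{dist}_p(x^k, A) \le \varepsilon_k \le \varepsilon_0$. Therefore every $x^k$ lies in the set $A + \varepsilon_0\,\overline{B}_p$, which is compact because $A$ is compact and $\varepsilon_0 < \infty$. By Bolzano--Weierstrass, $(x^k)$ has a convergent, hence Cauchy, subsequence. This contradicts the uniform separation $\|x^k-x^j\|_p \ge r\varepsilon/2$ for all $k \neq j$. (Equivalently, a packing count bounds the number of disjoint $\ell_p$-balls of radius $r\varepsilon/4$ in this compact set.) Hence $\varepsilon = 0$, i.e.\ $\delta_H(P_k,A)\to 0$.

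\emph{Main obstacle.} The only delicate step is the boundedness of the chosen points. This requires the finiteness of $\delta_H(P_0,A)$, which I take from the setting, together with the observation that the separating points may be chosen approximately (with the factor $1/2$) rather than as exact maximizers. Everything else is order-theoretic bookkeeping with the nested sets. Note that no property specific to $\ell_p$ is used beyond its being a norm on $\R^q$.
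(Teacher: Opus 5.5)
The paper gives no proof of this statement. It imports it from \cite[Theorem~8.5]{Lotov2004} and \cite[Theorem~6.6]{Ararat2024}, so your argument has to stand on its own, and it does. Each step is correct:
\begin{itemize}
\item the monotonicity of $\varepsilon_k$;
\item the reduction of both Hausdorff distances to one-sided suprema via $A \subseteq P_{k+1} \subseteq P_k$;
\item the choice of $x^k \in P_k$ with $\operatorname{dist}_p(x^k, P_{k+1}) \ge r\varepsilon/2$;
\item the separation $\|x^k - x^j\|_p \ge r\varepsilon/2$ for $j > k$, since $x^j \in P_{k+1}$;
\item the contradiction with compactness of $A + \varepsilon_0 \overline{B}_p$ (or simply of the polytope $P_0$).
\end{itemize}

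The cited sources treat this within Kamenev's framework of $H(r,A)$-sequences. In the paper the result is in any case implied by the quantitative Theorem~\ref{thm:general-rate}, whose bound $O(k^{1/(1-q)})$ tends to zero. Your proof isolates the purely qualitative content. It uses only nestedness, $A \subseteq P_k$, boundedness of $P_0$, and inequality \eqref{eq:h-sequence-property}. It needs no convexity, no supporting-halfspace structure, no quantitative packing estimate, and nothing specific to $\ell_p$.

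You could shorten it further. A decreasing sequence of nonempty compact sets converges in the Hausdorff metric to its intersection, so $\delta_H(P_k, P_{k+1}) \to 0$. Hence $\delta_H(P_k, A) \le \delta_H(P_k, P_{k+1})/r \to 0$. Your separated sequence is exactly a hands-on proof of this Cauchy property.

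One side remark needs correcting. Finiteness of $\varepsilon_0$ is not ``implicit because otherwise \eqref{eq:h-sequence-property} is vacuous.'' It comes from Definition~\ref{def:h-sequence}, which requires the $P_k$ to be polytopes, hence bounded, and the hypothesis is genuinely needed. Suppose $P_0$ were an unbounded polyhedron. One could then cut forever by supporting halfspaces that shrink the recession cone without ever making it trivial. This gives $\delta_H(P_k, P_{k+1}) = \delta_H(P_k, A) = \infty$ for all $k$, so the inequality holds in extended arithmetic, yet the conclusion fails.
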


\begin{theorem}[General convergence rate {\cite[Theorem~8.6]{Lotov2004}, \cite[Theorem~6.9]{Ararat2024}}]\label{thm:general-rate}
Let $A \subset \R^q$ be a nonempty compact convex set and $r > 0$. Let $(P_k)_{k \ge 0}$ be an $H(r, A)$-sequence of cutting. Then for any $\varepsilon \in (0, 1)$, there exists $N \ge 0$ such that for all $k \ge N$:
\begin{equation}\label{eq:general-rate}
\delta_H(P_k, A) \le (1 + \varepsilon)\,\lambda(r, A)\, k^{1/(1-q)},
\end{equation}
where $\lambda(r, A)$ depends on the topological properties of $A$ and can be found in \cite[Theorem~2]{Kamenev1992}.
\end{theorem}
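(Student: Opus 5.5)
This is the classical $H(r,A)$-sequence result with exponent $c=1$, and it uses only the cutting property, not the particular norm. I would follow Kamenev's packing argument \cite{Kamenev1992} as presented in \cite[Chapter~8]{Lotov2004}. Norms on $\R^q$ are equivalent, so I would run the argument in the given norm $\|\cdot\|_p$ and let all norm-dependent quantities enter through $\lambda(r,A)$.

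\textbf{Step 1: monotonicity.} Since $P_{k+1}\subseteq P_k$ and $A\subseteq P_k$ for all $k$, the errors $\varepsilon_k:=\delta_H(P_k,A)$ are nonincreasing.

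\textbf{Step 2: cut points are separated.} By \eqref{eq:h-sequence-property}, each cut $H_k$ removes a point $x^k\in P_k$ with distance at least $r\varepsilon_k$ from $P_{k+1}$, hence from $H_k$. Let $y^k\in A$ be its projection onto $A$. I would associate to $x^k$ the point $u^k:=y^k+\rho\,\nu^k$, where $\nu^k$ is the outward direction $(x^k-y^k)/\|x^k-y^k\|_p$ and $\rho$ is of order $r\varepsilon_k$; these are the deviation vectors of \cite{Lotov2004}. The key claim is that for $j<k$ we have $\|u^j-u^k\|_p\ge c\,r\,\varepsilon_k$. The reason is that $x^k\in P_k\subseteq H_j$, so $x^k$ was not cut at step $j$, while $x^j$ is at distance at least $r\varepsilon_j\ge r\varepsilon_k$ from $H_j$. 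Making this rigorous requires the metric projection onto $A$ to be $1$-Lipschitz, together with its outer-parallel-body version: the map $y+t\nu\mapsto y+\rho\nu$ for $t\ge\rho$. For a non-Euclidean norm this fails with constant $1$. I would therefore either work with Euclidean projections and absorb the norm-equivalence constants $q^{|1/2-1/p|}$ into $\lambda$, or rely on \cite[Lemma~8.4]{Lotov2004}, which is stated for general norms.

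\textbf{Step 3: packing.} The $k$ points $u^j$ lie on the boundary of the parallel body $A_\rho=A+\rho\overline B_p$, which is a $(q-1)$-dimensional surface. Balls of radius of order $r\varepsilon_k$ around them are disjoint and are contained in a shell of volume about $\mathrm{surf}(A)\cdot r\varepsilon_k$. Comparing volumes gives $k\,(r\varepsilon_k)^q\lesssim \mathrm{surf}(A)\,(r\varepsilon_k)$. Solving yields $\varepsilon_k\lesssim (\mathrm{surf}(A)/k)^{1/(q-1)}/r$, that is, $\varepsilon_k\le C k^{1/(1-q)}$. The precise asymptotic constant, meaning the factor $(1+\varepsilon)\lambda(r,A)$ for all $k\ge N$, comes from Kamenev's sharper covering count, which uses Steiner's formula for $\mathrm{vol}(A_\rho)$ as $\rho\to0$. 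This step also uses Theorem~\ref{thm:h-convergence}: since $\varepsilon_k\to0$, for large $k$ only the leading surface-area term matters. That is where the $\varepsilon$ slack and the threshold $N$ come from.

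\textbf{Main obstacle.} I expect Step 2 to be the hardest part: getting the separation inequality with a constant proportional to $r\varepsilon_k$ uniformly in $k$. I would first try the Euclidean version and then convert to $\|\cdot\|_p$ via norm equivalence. Since only the constant changes, this suffices for the claimed exponent $1/(1-q)$.
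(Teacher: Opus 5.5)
The paper gives no proof of this theorem; it imports it from \cite[Theorem~8.6]{Lotov2004} and \cite[Theorem~6.9]{Ararat2024}, so your argument has to stand on its own. Its outline is the right one (monotone errors, separated points near $\operatorname{bd}A$, volumetric packing in a thin shell), and Steps~1 and~3 are fine. Step~2, however, carries the whole proof and has a genuine gap, and both justifications you give point the wrong way.

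First, ``at least $r\varepsilon_k$ from $P_{k+1}$, hence from $H_k$'' is backwards. Since $P_{k+1}\subseteq H_k$, one only has $d(x,H_k)\le d(x,P_{k+1})$, and for a thin $P_k$ one can have $\max_{x\in P_k}d(x,H_k)\ll\delta_H(P_k,P_{k+1})$. That slip is repairable: $x^k\in P_k\subseteq P_{j+1}$ directly gives $\|x^j-x^k\|\ge d(x^j,P_{j+1})\ge r\varepsilon_j\ge r\varepsilon_k$.

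More seriously, this separation does not pass to the points $u^j$ you pack. The map $y+t\nu\mapsto y+\rho\nu$ is the metric projection onto $A_\rho$. Its $1$-Lipschitz property (Euclidean case) gives $\|u^j-u^k\|\le\|x^j-x^k\|$, an upper bound rather than the lower bound you need. The map is also constant on each outer normal ray. A cut tangent to $A$ near, but not at, $y$ removes the far part of the normal ray at $y$ and keeps the near part. So nothing you say excludes a removed $x^j$ and a later $x^k$ on the same ray, which would give $u^j=u^k$. Switching to Euclidean projections cannot fix this, since the problem is the direction of the inequality, not a constant. Appealing to an unstated \cite[Lemma~8.4]{Lotov2004} is not an argument either.

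What the cutting structure does give, for $\rho\le r\varepsilon_k$, follows from $u^k\in[y^k,x^k]\subseteq P_{j+1}$ and $\|x^j-u^j\|=d(x^j,A)-\rho\le\varepsilon_j-\rho$:
\[
\|u^j-u^k\|\ \ge\ d(x^j,P_{j+1})-\|x^j-u^j\|\ \ge\ \rho-(1-r)\,\varepsilon_j .
\]
For $r=1$, the case this paper actually uses (Proposition~\ref{prop:algorithm-h-sequence}), take $\rho=\varepsilon_k$; your Steps~2--3 then go through directly in $\|\cdot\|_p$. For $r<1$ the bound is vacuous, because $\varepsilon_j$ for early $j$ can far exceed $\rho$. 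Your Euclidean fallback lands exactly in this bad case: an $H(1,A)$-sequence for $\|\cdot\|_p$ is only guaranteed to be an $H(q^{-|1/2-1/p|},A)$-sequence for $\|\cdot\|_2$.

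For general $r$, you can drop the projection and pack the removed points themselves over a dyadic block. For $\lceil k/2\rceil\le j\le k$ they are $r\varepsilon_k$-separated. Their $r\varepsilon_k/2$-balls are therefore disjoint and lie in $(A+2\varepsilon_{\lceil k/2\rceil}\overline{B}_p)\setminus A$, a set of volume $O(\varepsilon_{\lceil k/2\rceil})$. This gives $k\,\varepsilon_k^{q}\le C\,\varepsilon_{\lceil k/2\rceil}$, and induction on $k$ yields $\varepsilon_k\le M\,k^{1/(1-q)}$. That recovers the exponent, but not Kamenev's specific constant $\lambda(r,A)$; your sketch also only defers that constant to \cite{Kamenev1992}.
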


\begin{remark}[From exponent $1$ to exponent $2$]\label{rem:exponent-jump}
The general rate \eqref{eq:general-rate} gives the convergence exponent $c = 1$, i.e., $\delta_H = O(k^{1/(1-q)})$. This holds for \emph{any} norm and relies solely on the $H$-sequence property (packing on the boundary of $A$). The improvement to $c = 2$, i.e., $\delta_H = O(k^{2/(1-q)})$, requires \emph{geometric estimates} that quantify how the hyperplane distance $d = \langle w', y - y' \rangle$ and the deviation vector separation $\|\alpha - \alpha'\|_p$ are related. In the Euclidean case, this was achieved by Ararat et al.\ \cite[Section~7]{Ararat2024} using the quadratic curvature of the Euclidean sphere. In the present paper, we obtain the same improvement for all $\ell_p$ norms via the Euclidean intermediary technique developed in Section~\ref{sec:lp-lemmas}.
\end{remark}

\section{$\ell_p$ Geometric Lemmas}\label{sec:lp-lemmas}

In this section we develop the geometric lemmas that replace the Euclidean-specific arguments in \cite[Section~7]{Ararat2024}. The proof structure follows the same template---hyperplane distance bound, separation bound, strict convexity, and packing---but the constants and exponents now depend on the smoothness and convexity parameters of the $\ell_p$ norm.

We adopt the following setting throughout this section. Let $A \subset \R^q$ be a compact convex set with $\operatorname{int} A \neq \emptyset$, let $P_k \supseteq A$ be the polyhedral outer approximation at iteration $k$, and let $\{z_k\}$ and $\{w_k\}$ be the sequences of optimal residuals and cut normals generated by the algorithm using $\|\cdot\|_p$ scalarization. For a vertex $v$ of $P_k$, we write $z^v := y^v - v$ where $y^v$ is the closest point to $v$ in $A$ with respect to $\|\cdot\|_p$, and $w^v := \nabla \|z^v\|_p \in S_{\pstar}^{q-1}$ is the corresponding cut normal. We denote by $\delta^H_k := \delta_H(P_k, A; \|\cdot\|_p)$ the Hausdorff approximation error and by $h_k := \max_{v \in \ext(P_k)} \|z^v\|_p$ the maximal residual norm.

For a fixed parameter $\eta > 0$ (which will later be taken as the inradius $r(A)$ of the target set), we define the \emph{deviation vector} associated with a support point $y \in \operatorname{bd} A$ and its normal $w \in S_{\pstar}^{q-1}$ as
\begin{equation}\label{eq:deviation-vector}
\alpha := y - \eta \, w.
\end{equation}
This construction, following \cite[Section~7]{Ararat2024} and \cite[Chapter~8]{Lotov2004}, encodes both the support point and the normal direction into a single vector whose separation properties govern the convergence rate. We also define the norm equivalence constant
\begin{equation}\label{eq:norm-equiv}
N_{2,p} := \begin{cases} q^{1/2 - 1/p} & \text{if } p \ge 2, \\ 1 & \text{if } 1 < p \le 2, \end{cases}
\end{equation}
which satisfies $\|x\|_2 \le N_{2,p} \, \|x\|_p$ for all $x \in \R^q$.

\begin{lemma}[Hyperplane distance bound]\label{lem:hyperplane-distance}
Let $p \in (1, \infty)$ and $\eta > 0$. Let $y, y' \in \operatorname{bd} A$ be support points with associated cut normals $w = \nabla\|z\|_p$ and $w' = \nabla\|z'\|_p$ in $S_{\pstar}^{q-1}$, where $z = y - v$, $z' = y' - v'$ for vertices $v, v'$ of $P_k$. Let $\alpha = y - \eta w$ and $\alpha' = y' - \eta w'$ be the corresponding deviation vectors \eqref{eq:deviation-vector}. Then the hyperplane distance
\[
d := \langle w', y - y' \rangle \ge 0
\]
satisfies
\begin{equation}\label{eq:hyperplane-distance}
d \le C(p,q) \cdot \frac{\|\alpha - \alpha'\|_p^2}{\eta},
\end{equation}
where
\[
C(p,q) := \frac{N_{2,p}^2}{2} = \begin{cases} q^{1-2/p}/2 & \text{if } p \ge 2, \\[4pt] 1/2 & \text{if } 1 < p \le 2. \end{cases}
\]
\end{lemma}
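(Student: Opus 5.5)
The plan is to work entirely with the ambient Euclidean inner product of $\R^q$, in which $d$ is defined, and to pass to $\|\cdot\|_p$ only at the very end via \eqref{eq:norm-equiv}. The $\ell_p$ structure enters only through one fact: each cut normal $w = \nabla\|z\|_p$ defines a supporting halfspace of $A$ at its support point. No smoothness modulus from Proposition~\ref{prop:lp-moduli} is used, and the normalization $\|w\|_{\pstar}=1$ plays no role either.

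\emph{Step 1 (support inequalities).} Since $y^v$ minimizes the convex function $\|\cdot - v\|_p$ over $A$, the first-order optimality condition gives $\langle \nabla\|z\|_p,\, a - y\rangle \ge 0$ for all $a\in A$. So $w$ is an inward normal of $A$ at $y$, and likewise $w'$ at $y'$. Applying this with $a=y$ at the point $y'$ gives $d = \langle w', y-y'\rangle \ge 0$, which is the nonnegativity claimed in the lemma. Applying it with $a=y'$ at the point $y$ gives $\tilde d := \langle w, y'-y\rangle \ge 0$. Writing $u := y-y'$ and $\Delta := w-w'$, adding the two inequalities yields
\[
-\langle \Delta, u\rangle = d + \tilde d \ge d .
\]
This is the monotonicity of the normal cone map. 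I expect this sign bookkeeping to be the main point requiring care. In particular, one must verify that the cut normal really is the inward normal, consistently with the paper's convention that $d\ge 0$. If the convention were the outward one, the same argument would go through with $\eta$ replaced by $-\eta$ in the deviation vector.

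\emph{Step 2 (Euclidean expansion).} By definition, $\alpha-\alpha' = u - \eta\Delta$. Expanding the squared Euclidean norm gives
\[
\|\alpha-\alpha'\|_2^2 = \|u\|_2^2 - 2\eta\langle u,\Delta\rangle + \eta^2\|\Delta\|_2^2 \;\ge\; -2\eta\langle u,\Delta\rangle \;\ge\; 2\eta\, d ,
\]
where we drop the two nonnegative squares and then use Step 1. Hence $d \le \|\alpha-\alpha'\|_2^2/(2\eta)$. This is the quadratic bound, and it holds for any choice of normals that support $A$ at the respective points.

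\emph{Step 3 (norm equivalence).} Finally, apply $\|x\|_2 \le N_{2,p}\|x\|_p$ from \eqref{eq:norm-equiv} to $x=\alpha-\alpha'$. For $p\le 2$ this inequality is the monotonicity of $\ell_p$ norms in $p$; for $p\ge 2$ it is H\"older's inequality. This gives
\[
d \le \frac{N_{2,p}^2}{2}\,\frac{\|\alpha-\alpha'\|_p^2}{\eta} = C(p,q)\,\frac{\|\alpha-\alpha'\|_p^2}{\eta},
\]
with the stated case values of $C(p,q)$. The exponent $2$ comes from the Euclidean expansion in Step 2, while the dependence on $p$ appears only in the dimensional constant.
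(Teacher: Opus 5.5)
Your proposal is correct and follows the paper's proof essentially line for line. The paper likewise expands $\|\alpha-\alpha'\|_2^2$ and uses the first-order optimality conditions at $y$ and $y'$ (your inward-normal check for $w=\nabla\|z\|_p$) to show the cross term is at least $2\eta d$. It then drops the two squared terms and finishes with $\|x\|_2 \le N_{2,p}\|x\|_p$.
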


\begin{proof}
We use the Euclidean inner product of $\R^q$ as an intermediary, bypassing the $\ell_p$ smoothness limitation.

\medskip
\noindent\emph{Step 1 (Euclidean expansion).}
Since $\alpha - \alpha' = (y - y') + \eta(w' - w)$, expanding the squared Euclidean norm gives
\begin{equation}\label{eq:euclidean-expansion}
\|\alpha - \alpha'\|_2^2
= \|y - y'\|_2^2 + 2\eta \langle y - y', w' - w \rangle + \eta^2 \|w' - w\|_2^2.
\end{equation}

\medskip
\noindent\emph{Step 2 (Support conditions).}
Decompose the cross-term:
\[
\langle y - y', w' - w \rangle = \langle w', y - y' \rangle + \langle w, y' - y \rangle = d + \langle w, y' - y \rangle.
\]
By the first-order optimality of the $\ell_p$ norm-minimization subproblem, $y' = \arg\min_{u \in A} \|u - v'\|_p$ implies $\langle w', u - y' \rangle \ge 0$ for all $u \in A$; taking $u = y$ gives $d = \langle w', y - y' \rangle \ge 0$. Similarly, $y = \arg\min_{u \in A} \|u - v\|_p$ implies $\langle w, y' - y \rangle \ge 0$.

Since both terms are non-negative:
\begin{equation}\label{eq:cross-nonneg}
\langle y - y', w' - w \rangle = d + \langle w, y' - y \rangle \ge d \ge 0.
\end{equation}

\medskip
\noindent\emph{Step 3 (Lower bound).}
Substituting \eqref{eq:cross-nonneg} into \eqref{eq:euclidean-expansion}, every term on the right-hand side is non-negative:
\[
\|\alpha - \alpha'\|_2^2 \ge 2\eta \, d.
\]
Therefore
\begin{equation}\label{eq:d-euclidean}
d \le \frac{\|\alpha - \alpha'\|_2^2}{2\eta}.
\end{equation}

\medskip
\noindent\emph{Step 4 (Norm equivalence).}
The standard inequality $\|x\|_2 \le N_{2,p}\,\|x\|_p$ for all $x \in \R^q$ (see \eqref{eq:norm-equiv}) gives $\|\alpha - \alpha'\|_2^2 \le N_{2,p}^2\,\|\alpha - \alpha'\|_p^2$. Combining with \eqref{eq:d-euclidean}:
\[
d \le \frac{N_{2,p}^2}{2\eta}\,\|\alpha - \alpha'\|_p^2 = C(p,q)\,\frac{\|\alpha - \alpha'\|_p^2}{\eta}. \qedhere
\]
\end{proof}

\begin{remark}[Comparison with the direct $\ell_p$ approach]\label{rem:direct-lp}
A natural alternative is to bound $d$ using the modulus of smoothness of $\|\cdot\|_p$ directly, which yields $d \le C_1(p) \cdot \|z - z'\|_p^{s(p)} / \eta^{s(p)-1}$ with $s(p) = \min(p,2)$---a weaker bound for $1 < p < 2$. The Euclidean intermediary argument avoids the $\ell_p$ smoothness bottleneck entirely: the Pythagorean expansion is always quadratic in $\R^q$ regardless of $p$, and the conversion to $\|\cdot\|_p$ via norm equivalence costs only a dimension-dependent constant, not a loss of exponent.
\end{remark}

\begin{lemma}[Deviation vector separation]\label{lem:separation}
Let the hypotheses of Lemma~\ref{lem:hyperplane-distance} hold. Define the hyperplane distances
\[
d_{ij} := \langle w_j, y_i - y_j \rangle \ge 0, \qquad d_{ji} := \langle w_i, y_j - y_i \rangle \ge 0.
\]
\begin{enumerate}
\item[(i)] For any $h > 0$ with $d_{ij} \ge h$:
\begin{equation}\label{eq:separation}
\|\alpha_i - \alpha_j\|_p \ge C_3(p,q) \cdot \sqrt{\eta \, h},
\end{equation}
where $C_3(p,q) := \sqrt{2}/N_{2,p}$.
\item[(ii)] If $\langle w_i, w_j \rangle \le 0$ (Euclidean inner product), then
\begin{equation}\label{eq:separation-normals}
\|\alpha_i - \alpha_j\|_p \ge C_2(p,q) \cdot \eta,
\end{equation}
where $C_2(p,q) := \sqrt{2}\, c_{p,q} / N_{2,p}$ with $c_{p,q} := \min_{\|w\|_{\pstar}=1} \|w\|_2 > 0$.
\end{enumerate}
In particular,
\begin{equation}\label{eq:separation-combined}
\|\alpha_i - \alpha_j\|_p \ge \min\bigl\{ C_2(p,q) \cdot \eta,\; C_3(p,q) \cdot \sqrt{\eta \, h} \bigr\}.
\end{equation}
\end{lemma}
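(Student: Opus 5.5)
Both parts rest on the Euclidean expansion \eqref{eq:euclidean-expansion} from the proof of Lemma~\ref{lem:hyperplane-distance}, now applied to the pair $(\alpha_i,\alpha_j)$:
\[
\|\alpha_i-\alpha_j\|_2^2=\|y_i-y_j\|_2^2+2\eta\langle y_i-y_j,\,w_j-w_i\rangle+\eta^2\|w_j-w_i\|_2^2 .
\]
Step~2 of that proof splits the cross term as $\langle y_i-y_j, w_j-w_i\rangle = d_{ij}+d_{ji}$. Both summands are nonnegative by the first-order optimality conditions of the two closest-point subproblems. Hence all three terms on the right are nonnegative, and we may drop whichever ones we do not need. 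In each part we first obtain a Euclidean lower bound on $\|\alpha_i-\alpha_j\|_2$. We then convert it to $\ell_p$ using $\|x\|_2\le N_{2,p}\|x\|_p$ from \eqref{eq:norm-equiv}, i.e.\ $\|\alpha_i-\alpha_j\|_p\ge \|\alpha_i-\alpha_j\|_2/N_{2,p}$.

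For (i), keep only the cross term: $\|\alpha_i-\alpha_j\|_2^2\ge 2\eta(d_{ij}+d_{ji})\ge 2\eta d_{ij}\ge 2\eta h$. Taking square roots and dividing by $N_{2,p}$ gives $\|\alpha_i-\alpha_j\|_p\ge \sqrt2\,\sqrt{\eta h}/N_{2,p}=C_3(p,q)\sqrt{\eta h}$. Equivalently, this is Lemma~\ref{lem:hyperplane-distance} with the inequality rearranged.

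For (ii), keep only the last term: $\|\alpha_i-\alpha_j\|_2^2\ge \eta^2\|w_i-w_j\|_2^2$. If $\langle w_i,w_j\rangle\le0$, then
\[
\|w_i-w_j\|_2^2=\|w_i\|_2^2+\|w_j\|_2^2-2\langle w_i,w_j\rangle\ge \|w_i\|_2^2+\|w_j\|_2^2\ge 2c_{p,q}^2,
\]
since $w_i,w_j\in S_{\pstar}^{q-1}$ and $c_{p,q}$ is the minimum of $\|\cdot\|_2$ on that sphere. This minimum is attained and positive by compactness of the sphere and continuity of $\|\cdot\|_2$, which does not vanish there. Thus $\|\alpha_i-\alpha_j\|_2\ge\sqrt2\,c_{p,q}\,\eta$, and dividing by $N_{2,p}$ yields $C_2(p,q)\eta$.

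The combined bound \eqref{eq:separation-combined} needs a case split. If $\langle w_i,w_j\rangle\le0$, then (ii) gives at least the first entry of the minimum. Otherwise we use (i), which requires $d_{ij}\ge h$. I read the statement as assuming $d_{ij}\ge h$ throughout, as in (i). Under that reading, (i) alone already implies the minimum bound, and (ii) simply supplies the alternative. The only mild obstacle is confirming that the nonnegativity of $d_{ij},d_{ji}$ from Step~2 applies for arbitrary indices $i,j$, not just for the pair in Lemma~\ref{lem:hyperplane-distance}. This holds because each $y_i$ minimizes $\|\cdot-v_i\|_p$ over $A$ with gradient $w_i$, and $y_j\in A$. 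The remaining steps are routine.
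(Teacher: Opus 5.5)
Your proposal is correct and follows the paper's proof essentially line for line: you use the same three-term Euclidean expansion with the cross term split as $d_{ij}+d_{ji}\ge 0$, keep the cross term for (i) and the $\eta^2\|w_i-w_j\|_2^2$ term for (ii), and then convert via $\|x\|_2\le N_{2,p}\|x\|_p$. Your explicit case split for the combined bound is a bit more careful than the paper's one-line ``taking the minimum,'' which tacitly requires that either $\langle w_i,w_j\rangle\le 0$ or $d_{ij}\ge h$ holds.
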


\begin{proof}
The proof extracts lower bounds from the same Euclidean expansion used in Lemma~\ref{lem:hyperplane-distance}. Recall from \eqref{eq:euclidean-expansion} and \eqref{eq:cross-nonneg}:
\begin{equation}\label{eq:expansion-lower}
\|\alpha_i - \alpha_j\|_2^2
= \underbrace{\|y_i - y_j\|_2^2}_{\ge\, 0}
+ \underbrace{2\eta\bigl(d_{ij} + \langle w_i, y_j - y_i\rangle\bigr)}_{\ge\, 2\eta \, d_{ij}\, \ge\, 0}
+ \underbrace{\eta^2 \|w_i - w_j\|_2^2}_{\ge\, 0}.
\end{equation}

\medskip
\noindent\emph{Part (i).}
From the middle term of \eqref{eq:expansion-lower}: $\|\alpha_i - \alpha_j\|_2^2 \ge 2\eta \, d_{ij} \ge 2\eta h$. By norm equivalence, $\|\alpha_i - \alpha_j\|_p \ge \|\alpha_i - \alpha_j\|_2 / N_{2,p} \ge \sqrt{2\eta h}/N_{2,p}$.

\medskip
\noindent\emph{Part (ii).}
When $\langle w_i, w_j \rangle \le 0$:
\[
\|w_i - w_j\|_2^2 = \|w_i\|_2^2 + \|w_j\|_2^2 - 2\langle w_i, w_j\rangle \ge \|w_i\|_2^2 + \|w_j\|_2^2 \ge 2\,c_{p,q}^2,
\]
since $\|w\|_2 \ge c_{p,q}$ for all $w \in S_{\pstar}^{q-1}$. From the last term of \eqref{eq:expansion-lower}: $\|\alpha_i - \alpha_j\|_2 \ge \eta\|w_i - w_j\|_2 \ge \sqrt{2}\,\eta\, c_{p,q}$, giving $\|\alpha_i - \alpha_j\|_p \ge \sqrt{2}\,\eta\, c_{p,q}/N_{2,p}$.

\medskip
The combined bound \eqref{eq:separation-combined} follows by taking the minimum.
\end{proof}

\begin{lemma}[Strict convexity of $\ell_p$]\label{lem:strict-convexity}
For $p \in (1, \infty)$, the norm $\|\cdot\|_p$ is strictly convex. Consequently, the norm-minimization scalarization subproblem
\[
\min_{y \in A} \|y - v\|_p
\]
has a unique optimal solution $y^v$ for each $v \notin A$, and the cut normal $w^v = \nabla\|z^v\|_p$ is uniquely determined. In particular, each iteration of the algorithm generates a new supporting halfspace, so
\begin{equation}\label{eq:strict-convexity-count}
|Z_k| = J + 1 + k,
\end{equation}
where $J + 1$ is the number of initial halfspaces defining $P_0$ and $Z_k$ denotes the set of distinct residual vectors up to iteration $k$.
\end{lemma}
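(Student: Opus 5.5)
I will argue in three stages: strict convexity of $\|\cdot\|_p$, uniqueness of the projection together with a well-defined normal, and then the counting identity \eqref{eq:strict-convexity-count}.

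\emph{Strict convexity.} I will show that $\|x+y\|_p = \|x\|_p + \|y\|_p$ with $x,y \neq 0$ forces $y = t x$ for some $t>0$. Minkowski's inequality for $p\in(1,\infty)$ comes from Hölder's inequality applied to $\sum_i |x_i+y_i|^{p-1}(|x_i|+|y_i|)$. Equality in that step requires two things: $|x_i+y_i| = |x_i|+|y_i|$ componentwise, which means the signs agree, and equality in Hölder, which means $|x|^p$ and $|x+y|^p$ are proportional. Since $t\mapsto t^p$ is strictly convex for $p>1$, these together give proportionality of $x$ and $y$. As a shortcut, one could instead cite Proposition~\ref{prop:lp-moduli}(ii): $\delta_p(\varepsilon)>0$ for $\varepsilon>0$ means the norm is uniformly convex, hence strictly convex.

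\emph{Uniqueness of $y^v$ and $w^v$.} The set $A$ is compact and the objective is continuous, so a minimizer exists. Suppose $y_1 \ne y_2$ are both minimizers with value $m=\operatorname{dist}_p(v,A)>0$; here $m>0$ because $v\notin A$ and $A$ is closed. Then $(y_1+y_2)/2 \in A$ by convexity. Since $y_1-v$ and $y_2-v$ have the same norm and are distinct, they are not positive multiples of each other, so strict convexity gives $\|(y_1+y_2)/2 - v\|_p < m$. This contradicts minimality. Next, $z^v = y^v - v \neq 0$, and Proposition~\ref{prop:lp-gradient} shows $\|\cdot\|_p$ is differentiable at $z^v$. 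Hence $w^v = \nabla\|z^v\|_p$ is a single, well-defined vector on $S_{\pstar}^{q-1}$, and the cut $H(w^v,A)$ is determined. One small point: the subgradient condition at $z^v$ reads $w^v\in N_A(y^v)$, so $H(w^v,A)$ indeed supports $A$ at $y^v$.

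\emph{Counting.} I expect this step to be the main obstacle, since it needs an argument that each new residual differs from every earlier one. At iteration $k$, the selected vertex satisfies $\|z^{v^k}\|_p = \delta_H(P_k,A) > 0$ as long as the algorithm has not terminated, and $v^k\in P_k$. Suppose the residual $z^{v^k}$ coincided with an earlier residual $z_j$, or more generally the normal $w^k$ coincided with an earlier normal $w_j$ for $j<k$, or with an initial normal $\omega^i$. Then $P_k \subseteq H(w^k,A)$ already, so $v^k$ satisfies $\langle w^k, v^k\rangle \le h_A(w^k)$. But the first-order condition gives $h_A(w^k) = \langle w^k, y^{v^k}\rangle$. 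It also gives $\langle w^k, y^{v^k} - v^k\rangle = \|z^{v^k}\|_p > 0$, by Euler's identity for the 1-homogeneous norm, $\langle \nabla\|z\|_p, z\rangle = \|z\|_p$. Together these say $\langle w^k, v^k\rangle < h_A(w^k)$, which is the opposite strict inequality required to separate $v^k$. So the two ingredients, $v^k \notin H(w^k,A)$ and $v^k \in P_k \subseteq H(w^k,A)$, are incompatible. This shows each iteration adds a genuinely new halfspace and a new residual. Counting the $J+1$ initial residuals/halfspaces plus one new one per iteration gives $|Z_k| = J+1+k$.

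Two points need care and I will make them explicit. First, distinct residual vectors must correspond to the distinct halfspaces. Second, the convention for the initial $J+1$ elements of $Z_0$ must be fixed, taking the residuals or normals attached to the halfspaces defining $P_0$ as in \cite{Ararat2024}.
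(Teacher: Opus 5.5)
Your strict-convexity and uniqueness steps are correct and follow the paper's route. Your uniqueness argument uses that two distinct residuals of equal norm are not positive multiples of each other, which is exactly what is needed. The paper's shortcut, that $z\mapsto\|z\|_p$ is a strictly convex function, is not literally true, since norms are linear along rays.

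For the counting identity the paper simply defers to \cite[Lemma~7.7]{Ararat2024}, while you give a self-contained separation argument. That is a good idea, but as written a sign error breaks it. The optimality condition for $\min_{y\in A}\|y-v\|_p$ is $0\in w^v+N_A(y^v)$, i.e.\ $-w^v\in N_A(y^v)$, not $w^v\in N_A(y^v)$. Equivalently, $\langle w^v,u-y^v\rangle\ge 0$ for all $u\in A$, which is the condition used in Lemma~\ref{lem:hyperplane-distance}. Hence $y^v$ \emph{minimizes} $\langle w^v,\cdot\rangle$ over $A$, and your identity $h_A(w^k)=\langle w^k,y^{v^k}\rangle$ is false: $h_A$ is a maximum and $\operatorname{int}A\neq\emptyset$. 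Even granting that identity, your own chain gives $\langle w^k,v^k\rangle<h_A(w^k)$, which places $v^k$ strictly \emph{inside} $H(w^k,A)$. That is consistent with $v^k\in P_k\subseteq H(w^k,A)$, so no contradiction is derived. Your claim $v^k\notin H(w^k,A)$ contradicts what you actually computed.

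The repair is to use the correctly oriented cut $\{u:\langle w^k,u\rangle\ge\langle w^k,y^{v^k}\rangle\}$, which is $H(-w^k,A)$ in the paper's notation. The paper's own ``$H(w^k,A)$'' has the same orientation slip, which is likely where yours came from.
\begin{itemize}
\item Euler's identity gives $\langle w^k,v^k\rangle=\langle w^k,y^{v^k}\rangle-\|z^{v^k}\|_p$, so $v^k\in P_k$ violates this halfspace by exactly $\|z^{v^k}\|_p>0$.
\item Every previously used halfspace, initial or added, contains $P_k$, so the new halfspace differs from all of them.
\item The normal $w=\nabla\|z\|_p$ is a function of $z$, and a supporting halfspace with inward normal $w$ is determined by $w$ (it is $\{u:\langle w,u\rangle\ge\min_A\langle w,\cdot\rangle\}$). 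So a repeated residual would force a repeated halfspace.
\end{itemize}
This gives one new residual per iteration, and $|Z_k|=J+1+k$ once the convention for the initial elements is fixed, as you noted. Done this way, your argument also shows that the novelty of each cut comes from the separation of $v^k$, which holds for any norm. Strict convexity and differentiability are needed only to make $y^v$ and $w^v$ well defined, not for the count itself; the paper's ``in particular'' phrasing obscures this.
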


\begin{proof}
For $1 < p < \infty$, the $\ell_p$ norm is strictly convex: if $\|x\|_p = \|y\|_p = 1$ and $x \neq y$, then $\|(x+y)/2\|_p < 1$. This is a consequence of the strict concavity of $t \mapsto t^{1/p}$ for $p > 1$ combined with the triangle inequality, or equivalently follows from $\delta_p(\varepsilon) > 0$ for all $\varepsilon > 0$ (Proposition~\ref{prop:lp-moduli}).

Strict convexity of the norm implies strict convexity of $z \mapsto \|z\|_p$ on $\R^q$, which ensures uniqueness of the nearest point projection onto any closed convex set. Since the norm is also differentiable for $p \in (1,\infty)$ (Proposition~\ref{prop:lp-gradient}), each optimal residual $z^v \neq 0$ yields a unique cut normal $w^v = \nabla\|z^v\|_p$, and distinct vertices yield distinct cuts. The counting identity \eqref{eq:strict-convexity-count} follows as in \cite[Lemma~7.7]{Ararat2024}.
\end{proof}

\begin{lemma}[$\ell_p$ packing number]\label{lem:packing}
Let $A \subset \R^q$ be a compact convex set with $\operatorname{int} A \neq \emptyset$, contained in a ball of $\ell_p$-radius $R > 0$. The maximum number of $\varepsilon$-separated points (in $\|\cdot\|_p$) on $\operatorname{bd} A$ satisfies
\begin{equation}\label{eq:packing-bound}
N_p(\operatorname{bd} A, \varepsilon) \le C_4(p, q) \cdot \left(\frac{R}{\varepsilon}\right)^{q-1},
\end{equation}
where $C_4(p, q) > 0$ depends only on $p$ and $q$.
\end{lemma}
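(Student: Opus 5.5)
The plan is to reduce the count to a volume comparison in $\R^q$ by passing to the Euclidean norm. Everything then rests on a standard fact: the boundary of a convex body contained in a ball of radius $R$ has $(q-1)$-dimensional surface area at most that of the ball. This is monotonicity of surface area under inclusion of convex sets, which follows from Cauchy's surface area formula.

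\textbf{Step 1 (norm equivalence).} Let $\{y_1,\dots,y_N\}\subset \operatorname{bd} A$ be $\varepsilon$-separated in $\|\cdot\|_p$. We have $\|x\|_p \le M_{p}\|x\|_2$, with $M_p := q^{1/p-1/2}$ for $p\le 2$ and $M_p := 1$ for $p \ge 2$. Hence the points are $\varepsilon' := \varepsilon/M_p$-separated in $\|\cdot\|_2$. The $\ell_p$ ball of radius $R$ containing $A$ lies inside a Euclidean ball of radius $R' := N_{2,p} R$, with $N_{2,p}$ as in \eqref{eq:norm-equiv}. It therefore suffices to prove the Euclidean statement $N_2(\operatorname{bd} A,\varepsilon') \le C(q)(R'/\varepsilon')^{q-1}$. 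The ratio $R'/\varepsilon' = N_{2,p}M_p\,R/\varepsilon$ changes only by a factor depending on $p$ and $q$, and that factor is absorbed into $C_4(p,q)$.

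\textbf{Step 2 (volume of a shell).} The Euclidean balls $B_2(y_i,\varepsilon'/2)$ are pairwise disjoint. I distinguish two regimes.
\begin{itemize}
\item If $\varepsilon' \ge R'$, then the number of $\varepsilon'$-separated points in a ball of radius $R'$ is bounded by a constant depending only on $q$. Indeed, the disjoint balls of radius $\varepsilon'/2$ all lie in the ball of radius $R'+\varepsilon'/2 \le \tfrac32\varepsilon'$, so their volumes give $N \le 3^q$. This is at most $3^q (R'/\varepsilon')^{q-1}\cdot(\varepsilon'/R')^{q-1}$, and it is only bounded in terms of $(R'/\varepsilon')^{q-1}$ when $\varepsilon' \le 2R'$. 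So I will restrict to $\varepsilon \lesssim \operatorname{diam}$, which is the regime used in the packing application. For larger $\varepsilon$, any two points of $\operatorname{bd} A$ are within distance $2R$, so $N \le 1$.
\item If $\varepsilon' < R'$, each disjoint ball $B_2(y_i,\varepsilon'/2)$ lies in the parallel shell $U := \{x : \operatorname{dist}_2(x,\operatorname{bd}A) \le \varepsilon'/2\}$.
\end{itemize}

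\textbf{Step 3 (main obstacle: bounding the shell).} The hardest step is bounding $\operatorname{vol}(U)$ by $C\,\varepsilon'\,R'^{q-1}$. Write $U \subseteq (A + B_2(0,\varepsilon'/2)) \setminus A_{-\varepsilon'/2}$, where $A_{-t}$ is the inner parallel body. The outer part is handled by Steiner's formula:
\[
\operatorname{vol}(A+tB_2)-\operatorname{vol}(A) = \sum_{j\ge1}\binom{q}{j}W_j(A)t^j .
\]
By monotonicity of the quermassintegrals, $W_j(A)\le W_j(B_2(0,R')) = \kappa_q R'^{\,q-j}$. For $t\le R'$ this gives a bound of order $t R'^{q-1}$. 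For the inner part $A\setminus A_{-t}$, I would use the coarea formula with the distance function to $\operatorname{bd}A$. The level sets are boundaries of the convex sets $A_{-s}\subseteq A$, so each has surface area at most that of $B_2(0,R')$, which gives a volume bound of at most $t\cdot q\kappa_q R'^{q-1}$.

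\textbf{Conclusion.} Combining the pieces,
\[
N\kappa_q(\varepsilon'/2)^q \le C'(q)\,\varepsilon' R'^{\,q-1},
\]
hence $N \le C(q)(R'/\varepsilon')^{q-1}$. Translating back through Step 1 yields \eqref{eq:packing-bound}, with $C_4(p,q)$ absorbing $N_{2,p}^{q-1}M_p^{q-1}$ and $C(q)$.
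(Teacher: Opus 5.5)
Your proof is correct, and it takes a genuinely different route from the paper.

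\textbf{The paper's route.} The paper stays in $\|\cdot\|_p$ and counts \emph{surface} content. It asserts that $\operatorname{bd}A\cap B_p(x_i,\varepsilon/2)$ has $(q-1)$-dimensional measure at least a constant times $(\varepsilon/2)^{q-1}$, justified by the supporting hyperplane at $x_i$. It then sums these against the surface area of an enclosing $\ell_p$ ball.

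\textbf{Your route.} You pass to $\|\cdot\|_2$ and count \emph{volume}. The disjoint balls $B_2(y_i,\varepsilon'/2)$ have exact volume $\kappa_q(\varepsilon'/2)^q$ and lie in the $\varepsilon'/2$-neighbourhood of $\operatorname{bd}A$. You bound the volume of that neighbourhood by Steiner's formula with monotone quermassintegrals on the outside, and by the coarea formula with monotone surface area of the inner parallel bodies $A_{-s}$ on the inside.

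\textbf{What each buys.} Your route buys uniformity over convex bodies. The paper's per-point lower bound is not uniform: near the apex of a narrow cone, or on a thin needle-shaped $A$, the boundary content inside $B(x_i,\varepsilon/2)$ is an arbitrarily small multiple of $\varepsilon^{q-1}$. A supporting hyperplane cutting the ball in a large disc says nothing about $\operatorname{bd}A$ itself. So that step cannot hold with a constant depending only on $p$ and $q$. Your volume count needs no such lower bound and delivers $C_4(p,q)$ as the lemma claims. The price is heavier machinery, plus the factor $(N_{2,p}M_p)^{q-1}=q^{(q-1)|1/2-1/p|}$ from the norm conversion, which is harmless here. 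The paper's route, had it worked, would be lighter and native to $\ell_p$.

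\textbf{The large-$\varepsilon$ caveat.} This is a real correction to the statement, not a defect of your proof. $N_p(\operatorname{bd}A,\varepsilon)\ge 1$ for every $\varepsilon$, while the right-hand side tends to $0$. So the lemma needs a hypothesis such as $\varepsilon\le 2R$, or the form $N_p\le\max\{1,C_4(R/\varepsilon)^{q-1}\}$. The paper uses this tacitly when it bounds the surface area of $B_p(0,R+\varepsilon/2)$ by $C'(p,q)R^{q-1}$. State the hypothesis explicitly.

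\textbf{Small cleanups.}
\begin{itemize}
\item Your first bullet in Step 2 is muddled. Note that $\varepsilon\le 2R$ implies $\varepsilon'\le 2R'$, because $M_p,N_{2,p}\ge 1$. Then for $R'\le\varepsilon'\le 2R'$ simply use $N\le 3^q\le 3^q2^{q-1}(R'/\varepsilon')^{q-1}$.
\item The inclusion $U\subseteq(A+tB_2)\setminus A_{-t}$ holds only up to the null set $\operatorname{bd}A_{-t}$. That is all the volume estimate needs.
\end{itemize}
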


\begin{proof}
This is a standard volumetric packing argument. The exponent $q-1$ is \emph{dimensional} and does not depend on $p$---only the constant $C_4(p,q)$ varies with the norm.

Consider the set of $\varepsilon$-separated points $\{x_1, \ldots, x_N\} \subset \operatorname{bd} A$ with $\|x_i - x_j\|_p \ge \varepsilon$ for $i \neq j$. The $\ell_p$-balls $B_p(x_i, \varepsilon/2)$ are pairwise disjoint. Since each $x_i \in \operatorname{bd} A \subset B_p(0, R)$, each ball $B_p(x_i, \varepsilon/2) \subset B_p(0, R + \varepsilon/2)$.

The $(q-1)$-dimensional content of $\operatorname{bd} A \cap B_p(x_i, \varepsilon/2)$ is bounded below by the content of a $(q-1)$-dimensional $\ell_p$-ball cap. By convexity of $A$ and the fact that $x_i \in \operatorname{bd} A$, a supporting hyperplane at $x_i$ intersects $B_p(x_i, \varepsilon/2)$ in a $(q-1)$-dimensional $\ell_p$-ball of radius $\varepsilon/2$. More precisely, a volumetric comparison gives
\[
N \cdot V_{q-1,p} \cdot \left(\frac{\varepsilon}{2}\right)^{q-1} \le \operatorname{Vol}_{q-1}\left(\operatorname{bd}(B_p(0, R+\varepsilon/2))\right) \le C'(p,q) \cdot R^{q-1},
\]
where $V_{q-1,p}$ is the volume of the $(q-1)$-dimensional unit $\ell_p$-ball. Rearranging gives
\[
N \le \frac{C'(p,q)}{V_{q-1,p}} \cdot \left(\frac{2R}{\varepsilon}\right)^{q-1} = C_4(p,q) \cdot \left(\frac{R}{\varepsilon}\right)^{q-1}.
\]
The norm equivalence $\|x\|_2 / q^{1/2-1/p} \le \|x\|_p \le q^{1/p-1/2} \|x\|_2$ ensures that $C_4(p,q)$ is finite for all $p \in (1,\infty)$, though it may grow as $p \to 1^+$ or $p \to \infty$ due to the increasing asphericity of $\ell_p$ balls.

The exponent $q-1$ is universal across all norms \cite{Gruber1993,Glasauer1997,Gruber1982}, since the equivalence of norms in finite dimensions implies that packing numbers differ only by a multiplicative constant.
\end{proof}

\section{Main Results}\label{sec:main-results}

We now state and prove the main result, which establishes that the convergence rate exponent is $c(p) = 2$ for every $\ell_p$ norm with $p \in (1,\infty)$.

\begin{theorem}[Universal $\ell_p$ convergence rate]\label{thm:lp-rate}
Let Assumption~\ref{ass:standing} hold and let $p \in (1,\infty)$. Let $A \subset \R^q$ be a compact convex slice of the upper image with $\operatorname{int} A \neq \emptyset$, contained in an $\ell_p$-ball of radius $R$. Let $\{P_k\}_{k \ge 0}$ be the sequence of outer approximations generated by the norm-minimization algorithm of \cite{Ararat2022} using $\|\cdot\|_p$ scalarization. Then
\begin{equation}\label{eq:main-rate}
\delta_H(P_k, A;\, \|\cdot\|_p) = O\bigl(k^{2/(1-q)}\bigr) \quad \text{as } k \to \infty.
\end{equation}
That is, the convergence rate exponent satisfies
\begin{equation}\label{eq:c-of-p}
c(p) = 2 \quad \text{for all } p \in (1, \infty).
\end{equation}
More precisely, for every $\varepsilon > 0$ there exist $K \in \mathbb{N}$ and a constant $\Lambda(p, q, R, A) > 0$ such that
\[
\delta_H(P_k, A;\, \|\cdot\|_p) \le (1+\varepsilon)\, \Lambda(p,q,R,A)\, k^{2/(1-q)} \quad \text{for all } k \ge K.
\]
The constant $\Lambda$ depends on $p$ and $q$ through the norm equivalence constant $N_{2,p}$ (see \eqref{eq:norm-equiv}), but the exponent $2$ is universal across all $\ell_p$ norms.
\end{theorem}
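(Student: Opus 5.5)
The plan follows the packing template of \cite[Chapter~8]{Lotov2004} and \cite[Section~7]{Ararat2024}. The Euclidean geometric lemmas are replaced by Lemmas~\ref{lem:hyperplane-distance}--\ref{lem:packing}, with $\eta := r(A)$.

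\emph{Lower bound on the cut distance.} Fix $k$ and let $h := \delta_H(P_k,A) = \|z^{v^k}\|_p$, using Proposition~\ref{prop:algorithm-h-sequence}. Since $\delta_H(P_j,A)$ is nonincreasing in $j$, every vertex selected at an iteration $j \le k$ has residual norm at least $h$. Consider the support points $y_i$ and normals $w_i$ generated up to iteration $k$. For $i<j$, the vertex $v^j$ survived the cut $H(w_i,A)$, so $\langle w_i, v^j\rangle \le h_A(w_i) = \langle w_i, y_i\rangle$. We also have $y_j = v^j + z^{v^j}$ and $\langle w_j, z^{v^j}\rangle = \|z^{v^j}\|_p$, from the gradient identity in Proposition~\ref{prop:lp-gradient}. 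Combining these with the optimality condition $\langle w_j, y_i - y_j\rangle \ge 0$, the plan is to show that one of the two hyperplane distances $d_{ij}, d_{ji}$ is at least a constant multiple of $h$. The constant may involve $\|w\|_2/\|w\|_{\pstar}$ ratios, i.e.\ $c_{p,q}$ and $N_{2,p}$.

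\emph{Separation and packing.} With that lower bound in hand, Lemma~\ref{lem:separation}(i) gives $\|\alpha_i-\alpha_j\|_p \ge C_3\sqrt{\eta h}$. In the complementary regime of nearly opposite normals, part (ii) gives separation $C_2\eta$, which exceeds $C_3\sqrt{\eta h}$ once $h$ is small enough, i.e.\ for $k\ge K$ by Theorem~\ref{thm:h-convergence}. The deviation vectors $\alpha_i = y_i - \eta w_i$ lie on the boundary of the parallel body $A - \eta\,\cdot$ (dual ball). More simply, I would place them on $\operatorname{bd}$ of a compact convex set of radius $\le R + \eta\, \max_{\|w\|_{\pstar}=1}\|w\|_p$, which is the outer parallel set when $w$ is viewed as the outer normal. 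Lemma~\ref{lem:packing} then bounds the number of $C_3\sqrt{\eta h}$-separated such points by $C_4 (R'/\sqrt{\eta h})^{q-1}$. Lemma~\ref{lem:strict-convexity} guarantees there are $J+1+k$ distinct ones. Hence $k \lesssim (\eta h)^{-(q-1)/2}$, i.e.\ $h \le \Lambda\, k^{2/(1-q)}$. The factor $(1+\varepsilon)$ and threshold $K$ come from absorbing $J+1$ and the small-$h$ condition, as in \cite[Theorem~7.9]{Ararat2024}.

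\emph{Main obstacle.} The delicate step is the first one: converting "the vertex survived earlier cuts and is at $\ell_p$-distance $h$" into $d_{ij}\gtrsim h$. The normal $w_j$ is an $\ell_{\pstar}$-unit vector, so $\langle w_j,\cdot\rangle$ measures $\ell_p$-distance, and the inequalities above must be chained carefully with the right sign conventions. A second subtlety is that the $\alpha_i$ must lie on the boundary of a single convex body, as packing requires. This holds because $y\mapsto y+\eta\nabla$-type maps are monotone; if that fails, I would instead apply a volumetric packing in $\R^q$ restricted to a shell of thickness $O(\eta)$. The shell gives the same $(q-1)$ exponent after a scaling argument, since the exponent in Lemma~\ref{lem:packing} is dimensional.
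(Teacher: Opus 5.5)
Your outline has the same four steps as the paper's proof. The two points you flag as delicate are exactly where it breaks down as you propose to fill them. The paper's text does not close them either: it splits on the sign of $\langle w_i,w_j\rangle$, asserting $d_{ij}\ge h_{j-1}$ whenever $\langle w_i,w_j\rangle>0$, and it invokes Lemma~\ref{lem:packing} for the $\alpha_i$ only because they lie in a bounded set. So these steps have to be argued, not cited.

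\emph{Lower bound on the hyperplane distance.} Since $w=\nabla\|z\|_p$ with $z=y-v$ is an inward normal, survival of $v^j$ under cut $i$ reads $\langle w_i,v^j-y_i\rangle\ge 0$; your inequality is reversed. Substituting $v^j=y_j-z^{v^j}$, and using H\"older together with $\langle w_j,z^{v^j}\rangle=\|z^{v^j}\|_p$, gives
$d_{ji}=\langle w_i,y_j-y_i\rangle\ge\langle w_i,z^{v^j}\rangle\ge\|z^{v^j}\|_p\bigl(1-\|w_i-w_j\|_{\pstar}\bigr)$.
This is all that survival yields, and it degenerates as the normals separate. Already for $p=2$ it reads $d_{ji}\ge\|z^{v^j}\|_2\langle w_i,w_j\rangle$. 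Two support points on a slightly rounded corner of $A$ can attain this with equality and with $d_{ij}=d_{ji}$, so both distances can be an arbitrarily small fraction of $h$ while $\langle w_i,w_j\rangle>0$. Hence no constant depending only on $p,q$ exists. Lemma~\ref{lem:separation}(ii), which needs $\langle w_i,w_j\rangle\le 0$, does not cover these pairs, so ``nearly opposite normals'' is the wrong complementary regime. Split instead on whether $\|w_i-w_j\|_{\pstar}\le 1/2$ or $>1/2$:
\begin{itemize}
\item In the first case $d_{ji}\ge h/2$, and the expansion gives $\|\alpha_i-\alpha_j\|_2^2\ge 2\eta d_{ji}\ge\eta h$.
\item In the second case the last term of \eqref{eq:expansion-lower} alone gives $\|\alpha_i-\alpha_j\|_2\ge\eta\|w_i-w_j\|_2\ge\eta/(2M)$, with $M:=\max\{1,q^{1/\pstar-1/2}\}$, whatever the sign of $\langle w_i,w_j\rangle$.
\end{itemize}

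\emph{Packing.} For $p\neq 2$ the points $\alpha_i=y_i-\eta w_i$ are not on the boundary of the $\ell_{\pstar}$ parallel body $A+\eta\{x:\|x\|_{\pstar}\le 1\}$. A support point of that body in direction $-w_i$ is $y_i-\eta\nabla\|w_i\|_p$. At a smooth boundary point $y_i$, $\alpha_i$ is in fact interior unless all nonzero entries of $w_i$ have equal modulus. Monotonicity of the normal map only fixes the sign of the cross term; it does not produce a convex surface.

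Your shell fallback fails as well. Here $\eta=r(A)$ is fixed while $\varepsilon_k\sim\sqrt{\eta h_k}\to 0$. A shell of thickness of order $\eta$ is therefore $q$-dimensional at scale $\varepsilon_k$ and holds of order $\eta R^{q-1}\varepsilon_k^{-q}$ separated points. That yields only $h_k=O(k^{-2/q})$.

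What you need is that the $\alpha_i$ lie on a Lipschitz image of a convex surface. Put $a_i:=y_i-\eta w_i/\|w_i\|_2$. It lies on the boundary of the Euclidean parallel body $\{x:\min_{y\in A}\|x-y\|_2\le\eta\}$, because $-w_i$ is an outer normal of $A$ at $y_i$. Then $\alpha_i=\Phi(a_i)$, where $\Phi(a):=\pi_A(a)+\eta\,u/\|u\|_{\pstar}$, $u:=(a-\pi_A(a))/\eta$, and $\pi_A$ is the Euclidean projection onto $A$. The map $\Phi$ is Lipschitz, since $\pi_A$ is $1$-Lipschitz and $u\mapsto u/\|u\|_{\pstar}$ is Lipschitz on the Euclidean unit sphere. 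So $\varepsilon_k$-separated $\alpha_i$ give $\varepsilon_k/L$-separated points on a convex boundary. There the $(q-1)$-exponent packing bound applies (Lemma~\ref{lem:packing} plus norm equivalence). Equivalently, run the whole separation argument with the Euclidean-normalized normals $w_i/\|w_i\|_2$.
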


\begin{proof}
The proof follows the four-step strategy of \cite[Theorem~7.2]{Ararat2024}, with Lemmas~\ref{lem:hyperplane-distance} and~\ref{lem:separation} providing the key estimates via the Euclidean intermediary.

\medskip
\noindent\emph{Step 1: Residual counting.}
By Lemma~\ref{lem:strict-convexity}, strict convexity of $\|\cdot\|_p$ for $p \in (1,\infty)$ guarantees that each iteration produces a new, distinct support point and cut normal. After $k$ iterations, the algorithm has generated $k + J + 1$ distinct support points $y_1, \ldots, y_{k+J+1} \in \operatorname{bd} A$ with associated normals $w_1, \ldots, w_{k+J+1} \in S_{\pstar}^{q-1}$, where $J+1$ is the number of halfspaces defining $P_0$.

\medskip
\noindent\emph{Step 2: Separation of deviation vectors.}
Set $\eta := r(A) > 0$, the inradius of $A$ (see \eqref{eq:inradius}), which is positive since $\operatorname{int} A \neq \emptyset$. Define the deviation vectors $\alpha_i := y_i - \eta w_i$ for $i = 1, \ldots, k + J + 1$.

Let $h_k := \delta_H(P_k, A; \|\cdot\|_p)$ be the Hausdorff error at iteration $k$. By Proposition~\ref{prop:algorithm-h-sequence}, the algorithm generates an $H(1,A)$-sequence, so $\delta_H(P_k, P_{k+1}) = h_k = \|z^{v^k}\|_p$.

We claim that for all distinct indices $i, j \le k + J + 1$, the deviation vectors satisfy the separation bound
\begin{equation}\label{eq:separation-claim}
\|\alpha_i - \alpha_j\|_p \ge \min\bigl\{C_2(p,q) \cdot \eta,\; C_3(p,q) \cdot \sqrt{\eta \, h_{\max(i,j)-1}}\bigr\},
\end{equation}
where $h_m := \delta_H(P_m, A; \|\cdot\|_p)$. This follows by the same inductive argument as in \cite[Lemma~7.9]{Ararat2024}, with our Lemma~\ref{lem:separation} providing the $\ell_p$ estimates in place of their Euclidean counterparts: for each pair $(i,j)$ with $i < j$, either the normals satisfy $\langle w_i, w_j \rangle \le 0$ (giving Part~(ii) of Lemma~\ref{lem:separation}), or $\langle w_i, w_j \rangle > 0$ and the hyperplane distance $d_{ij} = \langle w_j, y_i - y_j \rangle \ge h_{j-1}$ (giving Part~(i)). The latter inequality uses the fact that the farthest vertex $v^{j-1}$ satisfies $\langle w_j, v^{j-1} - y_j \rangle \ge h_{j-1}$ (since $v^{j-1} \notin H(w_j, A)$ when $\langle w_i, w_j \rangle > 0$), combined with $y_i \in A$; see \cite[pp.~21--22]{Ararat2024} for the detailed Euclidean argument, which transfers verbatim to the $\ell_p$ setting because the Euclidean inner products in the separation conditions are norm-independent.

Since $\{h_m\}$ is non-increasing and $\max(i,j) - 1 \le k$ for all indices under consideration, \eqref{eq:separation-claim} implies
\begin{equation}\label{eq:separation-simplified}
\|\alpha_i - \alpha_j\|_p \ge \varepsilon_k := \min\bigl\{C_2(p,q) \cdot \eta,\; C_3(p,q) \cdot \sqrt{\eta \, h_k}\bigr\}.
\end{equation}
For $k$ large enough that $h_k$ is sufficiently small, the second term dominates and we obtain $\varepsilon_k = C_3(p,q) \cdot \sqrt{\eta \, h_k}$.

\medskip
\noindent\emph{Step 3: Packing bound.}
The deviation vectors $\alpha_1, \ldots, \alpha_{k+J+1}$ are $\varepsilon_k$-separated in $\|\cdot\|_p$ and lie in a bounded region (since $\|\alpha_i\|_p \le \|y_i\|_p + \eta \le R + \eta$). By Lemma~\ref{lem:packing}:
\begin{equation}\label{eq:packing-applied}
k + J + 1 \le N_p(\varepsilon_k) \le C_4(p, q) \cdot \left(\frac{R + \eta}{\varepsilon_k}\right)^{q-1}.
\end{equation}

\medskip
\noindent\emph{Step 4: Assembly.}
Substituting $\varepsilon_k = C_3(p,q) \cdot \eta^{1/2} \cdot h_k^{1/2}$ into \eqref{eq:packing-applied}:
\[
k \le C_4(p,q) \left(\frac{R + \eta}{C_3(p,q) \cdot \eta^{1/2} \cdot h_k^{1/2}}\right)^{q-1} = C_6(p,q) \cdot h_k^{-(q-1)/2},
\]
where $C_6(p,q) := C_4(p,q) \cdot \bigl((R+\eta) / (C_3(p,q) \cdot \eta^{1/2})\bigr)^{q-1}$ absorbs all constants. Solving for $h_k$:
\[
h_k \le C_6(p,q)^{2/(q-1)} \cdot k^{-2/(q-1)} = \Lambda(p,q,R,A) \cdot k^{2/(1-q)},
\]
which gives the stated rate $\delta_H(P_k, A) = O(k^{2/(1-q)})$ with $c(p) = 2$ for all $p \in (1,\infty)$.
\end{proof}

\begin{remark}[Norm-independent exponent]\label{rem:norm-independent}
The convergence rate exponent $c(p) = 2$ is independent of $p$ for all $p \in (1, \infty)$: the $\ell_p$ norm-minimization algorithm matches the optimal Euclidean rate $O(k^{2/(1-q)})$ regardless of the choice of $p$. Only the multiplicative constant $\Lambda(p,q,R,A)$ varies.
\end{remark}

\begin{remark}[Role of norm equivalence]\label{rem:norm-equiv-role}
While the exponent $c(p) = 2$ is universal, the constant $\Lambda(p, q, R, A)$ depends on $p$ and $q$ through the norm equivalence constant $N_{2,p} = q^{1/2-1/p}$ (for $p \ge 2$). Specifically, $\Lambda$ grows as $q^{(q-1)(1-2/p)}$ for large $p$, reflecting the increasing asphericity of $\ell_p$ balls. For the Euclidean case $p = 2$, we have $N_{2,2} = 1$ and $C(2,q) = 1/2$, recovering the sharp constant of \cite{Ararat2024}.
\end{remark}

\begin{remark}[The Euclidean intermediary technique]\label{rem:euclidean-intermediary}
The universal exponent rests on the \emph{Euclidean intermediary} argument in Lemma~\ref{lem:hyperplane-distance}: instead of working in the $\ell_p$ metric (which gives the weaker exponent $s(p) = \min(p,2)$ from the modulus of smoothness), we expand the squared Euclidean norm $\|\alpha - \alpha'\|_2^2$ and exploit the non-negativity of all terms. The Euclidean inner product structure of $\R^q$ is available regardless of the scalarization norm, and the Pythagorean expansion is always quadratic. The conversion from $\|\cdot\|_2$ to $\|\cdot\|_p$ via norm equivalence incurs only a dimension-dependent constant, not a loss of exponent.
\end{remark}

\begin{remark}[Comparison with earlier approaches]\label{rem:old-conjecture}
A direct analysis using the modulus of smoothness of $\|\cdot\|_p$ yields $d \le C_1(p) \cdot \|z - z'\|_p^{s(p)} / \eta^{s(p)-1}$ with $s(p) = \min(p,2)$ (see Remark~\ref{rem:direct-lp}), giving $c(p) = \min(p,2)$---a weaker result for $1 < p < 2$. An even earlier conjecture predicted $c(p) = \min(2, p, p/(p-1))$, mistakenly attributing a rate-limiting role to the dual norm's convexity power type. In fact, neither the primal smoothness nor the dual convexity determines the exponent: the rate-limiting quantity is the Euclidean curvature of the deviation surface, which is always quadratic.
\end{remark}

\section{Algorithm and Implementation}\label{sec:algorithm}

We describe the $\ell_p$ norm-minimization algorithm and discuss the computational aspects that arise when $p \neq 2$.

\subsection{$\ell_p$ Scalarization Subproblem}\label{sec:subproblem}

The algorithm is based on the norm-minimizing scalarization introduced by Ararat et al.\ \cite{Ararat2022}. Given a vertex $v \in \R^q$ of the current outer approximation $P_k$, the subproblem computes the distance from $v$ to the compact slice $A = \mathcal{P} \cap S(\gamma)$ of the upper image:
\begin{equation}\label{eq:Pv}\tag{P($v$)}
\begin{aligned}
& \underset{x \in \mathcal{X},\; z \in \R^q}{\text{minimize}} && \|z\|_p \\
& \text{subject to} && \Gamma(x) - z - v \le_C 0, \\
&&& \bar{w}^\top(v + z) \le \gamma,
\end{aligned}
\end{equation}
where $\bar{w} \in \operatorname{int} C^+$ and $\gamma \in \R$ define the slice $S(\gamma) := \{y \in \R^q : \bar{w}^\top y \le \gamma\}$. See \cite[Section~3]{Ararat2024} for the construction of $\gamma$ and the duality theory.

The optimal solution $(x^v, z^v)$ of~\eqref{eq:Pv} yields:
\begin{itemize}
\item the \emph{optimal residual} $z^v$, whose norm $\|z^v\|_p = d(v, A;\, \|\cdot\|_p)$ is the $\ell_p$-distance from $v$ to $A$;
\item the \emph{support point} $y^v := v + z^v \in \operatorname{bd} A$;
\item the \emph{cut normal} $\tilde{w}^v := w^v - \lambda^v \bar{w}$, where $(w^v, \lambda^v)$ is an optimal dual solution. By Proposition~\ref{prop:lp-gradient} and the strong duality results of \cite[Section~3]{Ararat2024}, when $z^v \neq 0$ the cut normal satisfies $\|\tilde{w}^v\|_{\pstar} = 1$ and is given by
\begin{equation}\label{eq:cut-normal}
\tilde{w}^v_i = \frac{|z^v_i|^{p-1} \operatorname{sgn}(z^v_i)}{\|z^v\|_p^{p-1}}, \quad i = 1, \ldots, q.
\end{equation}
\end{itemize}

The key computational distinction from the Euclidean case is the objective function of~\eqref{eq:Pv}:
\begin{itemize}
\item For $p = 2$: $\|z\|_2$ is a second-order cone (SOC) objective, and~\eqref{eq:Pv} reduces to a second-order cone program (SOCP) or, after squaring, a quadratic program. Standard QP/SOCP solvers apply directly.
\item For $p \neq 2$: $\|z\|_p$ is convex but not SOC-representable in general. However, the $\ell_p$ norm can be represented using a system of second-order and power cone constraints, which modern conic solvers (MOSEK, SCS) handle natively. Disciplined convex programming frameworks such as CVX~\cite{cvx} generate these representations automatically from the expression $\|z\|_p$.
\end{itemize}

\subsection{The Algorithm}\label{sec:algorithm-description}

Algorithm~\ref{alg:lp-outer} presents the $\ell_p$ norm-minimization outer approximation procedure. The structure follows \cite[Algorithm~1]{Ararat2024}, with $\|\cdot\|_p$ replacing the Euclidean norm throughout.

\begin{algorithm}[H]
\caption{$\ell_p$ Norm-Minimization Outer Approximation}\label{alg:lp-outer}
\begin{algorithmic}[1]
\Input{CVOP $(\Gamma, \mathcal{X}, C)$, norm exponent $p \in (1,\infty)$, tolerance $\varepsilon > 0$, slice parameters $(\bar{w}, \gamma)$.}
\Initialize{Compute initial halfspaces by solving weighted-sum problems $\text{WS}(\omega^i)$ for $i = 1, \ldots, I$, where $\omega^1, \ldots, \omega^I \in C^+ \setminus \{0\}$ are chosen so that $P_0 := \bigcap_{i=1}^{I} H(\tilde{w}^i, A)$ satisfies $P_0 \supseteq A$ and $P_0$ is bounded.}
\For{$k = 0, 1, 2, \ldots$}
    \State Compute $V_k := \ext(P_k)$ \Comment{Vertex enumeration}
    \For{each $v \in V_k$}
        \State Solve~\eqref{eq:Pv} to obtain $(x^v, z^v)$ and set $y^v \gets v + z^v$
    \EndFor
    \State $v^k \gets \arg\max_{v \in V_k} \|z^v\|_p$ \Comment{Farthest vertex}
    \If{$\|z^{v^k}\|_p \le \varepsilon$}
        \State \textbf{return} $P_k$ \Comment{$\varepsilon$-solution found}
    \EndIf
    \State Compute cut normal $\tilde{w}^{v^k}$ via~\eqref{eq:cut-normal}
    \State $P_{k+1} \gets P_k \cap H(\tilde{w}^{v^k}, y^{v^k})$ \Comment{Cut}
\EndFor
\Output{Outer approximation $P_k$ with $\delta_H(P_k, A;\, \|\cdot\|_p) \le \varepsilon$.}
\end{algorithmic}
\end{algorithm}

By Proposition~\ref{prop:algorithm-h-sequence}, the sequence $(P_k)_{k \ge 0}$ is an $H(1,A)$-sequence of cutting. Theorem~\ref{thm:h-convergence} guarantees convergence, and Theorem~\ref{thm:lp-rate} establishes the rate $\delta_H(P_k, A) = O(k^{2/(1-q)})$ for all $p \in (1,\infty)$.

\begin{remark}[Validity of the algorithm for general $\ell_p$ norms]\label{rem:general-lp-validity}
The foundational results of Ararat et al.\ \cite{Ararat2024} that underpin Algorithm~\ref{alg:lp-outer}---strong duality of the scalarization \cite[Proposition~3.2]{Ararat2024}, the supporting halfspace property \cite[Proposition~3.6]{Ararat2024}, correctness \cite[Theorem~4.5]{Ararat2024}, finiteness \cite[Theorem~5.2]{Ararat2024}, the $H(1,A)$-sequence property \cite[Corollary~6.5]{Ararat2024}, and the general convergence rate $O(k^{1/(1-q)})$ \cite[Corollary~6.10]{Ararat2024}---are all established for an arbitrary norm. In particular, the proofs in \cite[Sections~3--6]{Ararat2024} rely only on convexity of $\|\cdot\|$, H\"older's inequality for the dual pair $(\|\cdot\|, \|\cdot\|_*)$, and volumetric packing of norm balls, none of which is specific to the Euclidean case. Only the improved rate $O(k^{2/(1-q)})$ in \cite[Section~7]{Ararat2024} requires Assumption~7.1 therein ($\|\cdot\| = \|\cdot\|_2$), which our Lemmas~\ref{lem:hyperplane-distance}--\ref{lem:packing} and Theorem~\ref{thm:lp-rate} replace for all $\ell_p$ norms with $p \in (1,\infty)$. The additional properties needed for the improved rate---strict convexity (ensuring uniqueness of the projection and cut normal) and differentiability (yielding the gradient formula \eqref{eq:cut-normal})---both hold for $\ell_p$ norms with $1 < p < \infty$.
\end{remark}

\section{Numerical Experiments}\label{sec:experiments}

We present numerical experiments that confirm the $p$-independent convergence rate predicted by Theorem~\ref{thm:lp-rate}. All experiments use Algorithm~\ref{alg:lp-outer} with the full vertex enumeration strategy.

\subsection{Test Problems}\label{sec:test-problems}

We test on three problems with known Pareto fronts, allowing precise evaluation of the Hausdorff approximation error.

The first test problem is Example~1 from \cite[Section~8]{Ararat2024}:
\begin{equation}\label{eq:example1}
\min\, \Gamma(x) = x \;\text{ w.r.t.\ } \le_{\R^q_+} \quad \text{s.t. } \|x - e\|_2 \le 1,
\end{equation}
where $e = (1, \ldots, 1)^\top \in \R^q$. The Pareto front is a portion of the sphere $\|y - e\|_2 = 1$ in the positive orthant. For $q = 2$, this is a quarter-circle centered at $(1,1)$; for $q = 3$, it is a spherical cap. The weighted-sum scalarization admits a closed-form solution $x^* = e - w/\|w\|_2$. We test with $q \in \{2, 3\}$.

The second test problem is a rotated ellipse:
\begin{equation}\label{eq:rotated-ellipse}
\min\, \Gamma(x) = x \;\text{ w.r.t.\ } \le_{\R^2_+} \quad \text{s.t. } \frac{(x_2 - x_1)^2}{10} + \frac{(x_1 + x_2 - 4)^2}{6} \le 1.
\end{equation}
The feasible set is an ellipse centered at $(2, 2)$ with semi-axes $\sqrt{10}$ and $\sqrt{6}$, rotated $45°$ from the coordinate axes. The non-isotropic curvature of the boundary provides a richer test case than Example~1, where the Pareto front has constant curvature. We test with $q = 2$.

The third test problem is Example~2 from \cite[Section~8]{Ararat2024}:
\begin{equation}\label{eq:example2}
\min\, \Gamma(x) = \bigl(\|x - a_1\|_2^2,\; \|x - a_2\|_2^2,\; \|x - a_3\|_2^2\bigr)^\top \;\text{ w.r.t.\ } \le_{\R^3_+}
\end{equation}
subject to $x_1 + 2x_2 \le 10$, $0 \le x_1 \le 10$, $0 \le x_2 \le 4$, where $a_1 = (1,1)^\top$, $a_2 = (2,3)^\top$, $a_3 = (4,2)^\top$. Unlike Example~1, the objective is nonlinear (sum of squared distances to three reference points) and the feasible set is a polytope rather than a ball. This provides a test case with $q = 3$ whose geometry is qualitatively different from Example~1.

\subsection{Setup}\label{sec:setup}

The algorithm is implemented in MATLAB. The $\ell_p$ subproblems~\eqref{eq:Pv} are solved using CVX~\cite{cvx}, which accepts $\|\cdot\|_p$ as a built-in atom and generates the appropriate conic representation automatically; for $p = 2$ this reduces to a standard SOCP, while for general $p$, CVX uses a combination of second-order and power cone constraints. Vertex enumeration ($\mathcal{H}$-to-$\mathcal{V}$ conversion at line~4 of Algorithm~\ref{alg:lp-outer}) is performed by the BENSOLVE Tools library~\cite{bensolve}. The cut normal~\eqref{eq:cut-normal} is computed directly from the optimal residual $z^v$ in $O(q)$ operations, and the dual norm identity $\|\tilde{w}^v\|_{\pstar} = 1$ (Proposition~\ref{prop:lp-gradient}) serves as a numerical check. Between iterations, subproblem solutions at non-cut vertices are cached and reused, reducing the per-iteration cost from $O(|V_k|)$ subproblem solves to $O(|\text{new vertices}|)$.

All experiments were run on a desktop PC with an Intel Core i9-9900K CPU (3.60\,GHz, 8 cores) and 32\,GB RAM, running Windows~11.

For each test problem, we run Algorithm~\ref{alg:lp-outer} with $p \in \{1.25, 1.5, 2, 3, 4, 8\}$, covering both $p < 2$ (where the direct $\ell_p$ approach would predict $c(p) = p < 2$) and $p > 2$ (where the dual exponent $\pstar < 2$ introduces further complications for naive approaches). The convergence tolerances are $\varepsilon = 10^{-4}$ for Example~1 with $q = 2$, $\varepsilon = 0.01$ for Example~1 with $q = 3$, $\varepsilon = 10^{-3}$ for the rotated ellipse, and $\varepsilon = 0.05$ for Example~2.

At each iteration $k$, we record the Hausdorff error $\delta_H(P_k, A;\, \|\cdot\|_p)$. To extract the empirical convergence rate exponent $\hat{c}(p)$, we fit the model
\begin{equation}\label{eq:rate-model}
\delta_H(P_k, A) \approx \Lambda \cdot k^{\hat{c}/(1-q)}
\end{equation}
by linear regression in log-log coordinates (i.e., $\log \delta_H$ vs.\ $\log k$) on the active decay region, excluding the initial transient and the terminal plateau near $\varepsilon$. We apply a monotone envelope ($\delta_H(P_k, A) \gets \min_{j \le k} \delta_H(P_j, A)$) to smooth non-monotonicity caused by vertex enumeration artifacts.

\subsection{Results}\label{sec:results}

For Example~1 with $q = 3$, Table~\ref{tab:example1-q3} and Figure~\ref{fig:convergence-q3} report the fitted rate exponents. The algorithm runs for $50$--$89$ iterations depending on $p$, providing sufficient data for rate estimation ($R^2 > 0.90$ for all fits).

\begin{table}[H]
\centering
\caption{Empirical convergence rate exponents for Example~1 with $q = 3$: $p$ is the norm index, $\pstar$ the dual exponent, $\hat{c}(p)$ the fitted rate exponent from~\eqref{eq:rate-model}, $R^2$ the coefficient of determination, and Iterations the total iteration count. The theoretical prediction is $c(p) = 2$ for all $p$.}\label{tab:example1-q3}
\begin{tabular}{@{}ccccc@{}}
\toprule
$p$ & $\pstar$ & $\hat{c}(p)$ & $R^2$ & Iterations \\
\midrule
$1.25$ & $5.00$ & $2.46$ & $0.920$ & $89$ \\
$1.50$ & $3.00$ & $2.38$ & $0.902$ & $82$ \\
$2.00$ & $2.00$ & $2.40$ & $0.915$ & $74$ \\
$3.00$ & $1.50$ & $2.40$ & $0.916$ & $59$ \\
$4.00$ & $1.33$ & $2.45$ & $0.914$ & $57$ \\
$8.00$ & $1.14$ & $2.51$ & $0.910$ & $50$ \\
\bottomrule
\end{tabular}
\end{table}

The empirical exponents cluster around $\hat{c} \approx 2.4$ with a spread of only $\hat{c}_{\max} - \hat{c}_{\min} = 0.13$ across all six values of $p$, consistent with the theoretical prediction $c(p) = 2$. The slight overestimate ($\hat{c} > 2$) is attributable to an initial plateau of $3$--$5$ iterations where the error barely decreases (visible in Figure~\ref{fig:convergence-q3}), which inflates the fitted slope.

\begin{figure}[H]
\centering
\includegraphics[width=0.8\textwidth]{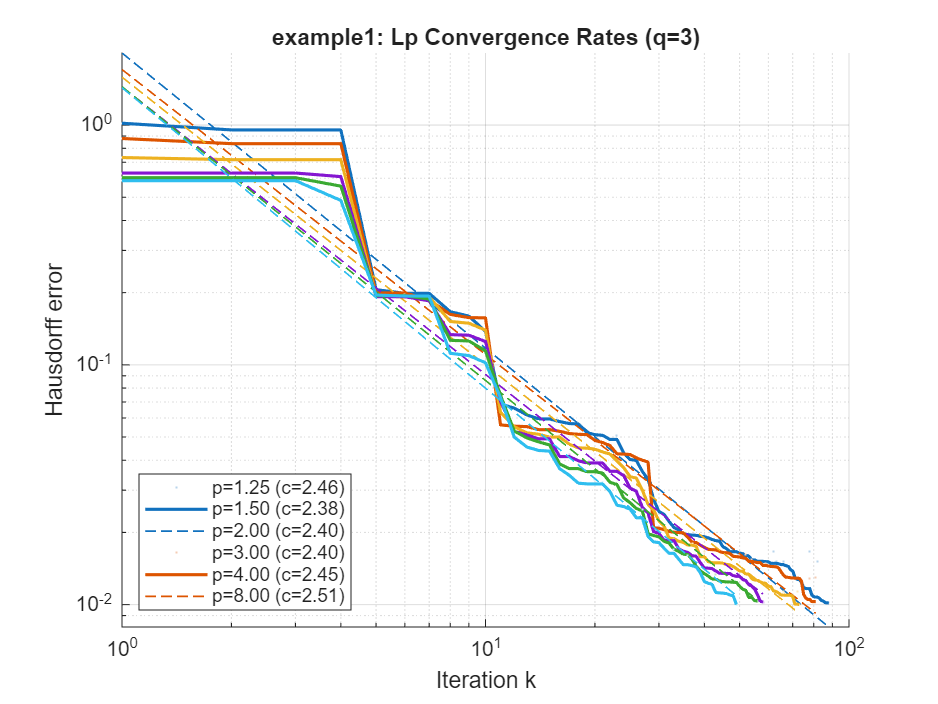}
\caption{Convergence of the Hausdorff error $\delta_H(P_k, A;\, \|\cdot\|_p)$ for Example~1 with $q=3$ and $p \in \{1.25, 1.5, 2, 3, 4, 8\}$. Solid lines show the monotone envelope; dashed lines show fitted power-law models in log-log coordinates. All curves decay at approximately the same rate, confirming $c(p) = 2$.}\label{fig:convergence-q3}
\end{figure}

For Example~1 with $q = 2$, Table~\ref{tab:example1-q2} and Figure~\ref{fig:convergence-q2} report results with a tighter tolerance $\varepsilon = 10^{-4}$ to ensure sufficient iterations for reliable rate estimation. The algorithm runs for $38$--$53$ iterations with $R^2 > 0.93$ for all fits.

\begin{table}[H]
\centering
\caption{Empirical convergence rate exponents for Example~1 with $q = 2$. The theoretical prediction is $c(p) = 2$ for all $p$.}\label{tab:example1-q2}
\begin{tabular}{@{}ccccc@{}}
\toprule
$p$ & $\pstar$ & $\hat{c}(p)$ & $R^2$ & Iterations \\
\midrule
$1.25$ & $5.00$ & $1.55$ & $0.935$ & $38$ \\
$1.50$ & $3.00$ & $1.56$ & $0.963$ & $45$ \\
$2.00$ & $2.00$ & $1.63$ & $0.959$ & $53$ \\
$3.00$ & $1.50$ & $1.63$ & $0.951$ & $49$ \\
$4.00$ & $1.33$ & $1.48$ & $0.947$ & $43$ \\
$8.00$ & $1.14$ & $1.47$ & $0.966$ & $51$ \\
\bottomrule
\end{tabular}
\end{table}

The empirical exponents cluster around $\hat{c} \approx 1.55$ (spread $0.16$), showing the same $p$-independence as the $q = 3$ case. The systematic underestimate relative to $c = 2$ is a finite-sample effect common to all $q = 2$ experiments: since each cut reduces the error by a larger fraction when $q = 2$ (the theoretical rate is $O(k^{-2})$ vs.\ $O(k^{-1})$ for $q = 3$), the algorithm terminates in fewer iterations and the log-log fit captures the transient rather than the asymptotic slope. Even the Euclidean case $p = 2$ yields only $\hat{c}(2) = 1.63$, confirming that the gap is not $p$-specific.

\begin{figure}[H]
\centering
\includegraphics[width=0.8\textwidth]{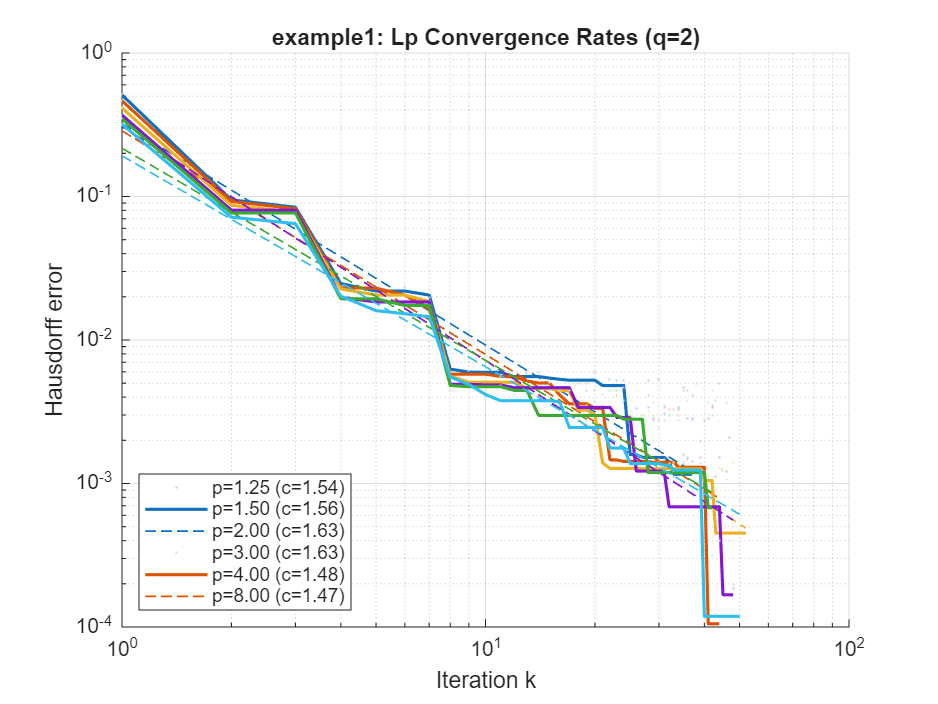}
\caption{Convergence of the Hausdorff error $\delta_H(P_k, A;\, \|\cdot\|_p)$ for Example~1 with $q=2$ and $p \in \{1.25, 1.5, 2, 3, 4, 8\}$. The convergence tolerance is $\varepsilon = 10^{-4}$.}\label{fig:convergence-q2}
\end{figure}

For the rotated ellipse with $q = 2$, Table~\ref{tab:rotated-ellipse} reports results with $\varepsilon = 10^{-3}$. The tighter tolerance yields $35$--$45$ iterations per run with $R^2 > 0.93$ for all fits.

\begin{table}[H]
\centering
\caption{Empirical convergence rate exponents for the rotated ellipse with $q = 2$. The theoretical prediction is $c(p) = 2$ for all $p$.}\label{tab:rotated-ellipse}
\begin{tabular}{@{}ccccc@{}}
\toprule
$p$ & $\pstar$ & $\hat{c}(p)$ & $R^2$ & Iterations \\
\midrule
$1.25$ & $5.00$ & $1.54$ & $0.949$ & $42$ \\
$1.50$ & $3.00$ & $1.61$ & $0.939$ & $40$ \\
$2.00$ & $2.00$ & $1.58$ & $0.970$ & $45$ \\
$3.00$ & $1.50$ & $1.75$ & $0.975$ & $44$ \\
$4.00$ & $1.33$ & $1.62$ & $0.971$ & $35$ \\
$8.00$ & $1.14$ & $1.43$ & $0.952$ & $44$ \\
\bottomrule
\end{tabular}
\end{table}

The empirical exponents range from $\hat{c} = 1.43$ to $1.75$ (spread $0.32$), again $p$-independent but systematically below $c = 2$. The pattern mirrors Example~1 with $q = 2$: moderate iteration counts ($35$--$45$) mean the log-log fit reflects the transient regime. The Euclidean case $p = 2$ yields $\hat{c}(2) = 1.58$, confirming that the underestimate is not $p$-specific.

For Example~2 with $q = 3$, Table~\ref{tab:example2-q3} and Figure~\ref{fig:convergence-example2} report results with $\varepsilon = 0.05$. Unlike Example~1, the objective is nonlinear and the feasible set is a polytope, providing an independent test of the $p$-independence of the rate exponent. The algorithm runs for $41$--$72$ iterations with $R^2 > 0.92$ for all fits.

\begin{table}[H]
\centering
\caption{Empirical convergence rate exponents for Example~2 with $q = 3$. The theoretical prediction is $c(p) = 2$ for all $p$.}\label{tab:example2-q3}
\begin{tabular}{@{}ccccc@{}}
\toprule
$p$ & $\pstar$ & $\hat{c}(p)$ & $R^2$ & Iterations \\
\midrule
$1.25$ & $5.00$ & $2.74$ & $0.969$ & $72$ \\
$1.50$ & $3.00$ & $2.69$ & $0.966$ & $61$ \\
$2.00$ & $2.00$ & $2.66$ & $0.964$ & $56$ \\
$3.00$ & $1.50$ & $2.75$ & $0.940$ & $49$ \\
$4.00$ & $1.33$ & $2.72$ & $0.930$ & $45$ \\
$8.00$ & $1.14$ & $2.72$ & $0.928$ & $41$ \\
\bottomrule
\end{tabular}
\end{table}

The empirical exponents cluster tightly around $\hat{c} \approx 2.72$ with the smallest spread of all experiments ($0.09$). As with Example~1 at $q = 3$, the fitted exponents overestimate $c = 2$, attributable to the initial plateau visible in Figure~\ref{fig:convergence-example2}. The tighter clustering here, on a problem with nonlinear objectives and a polytope feasible set, provides further evidence that the rate exponent does not depend on $p$.

\begin{figure}[H]
\centering
\includegraphics[width=0.8\textwidth]{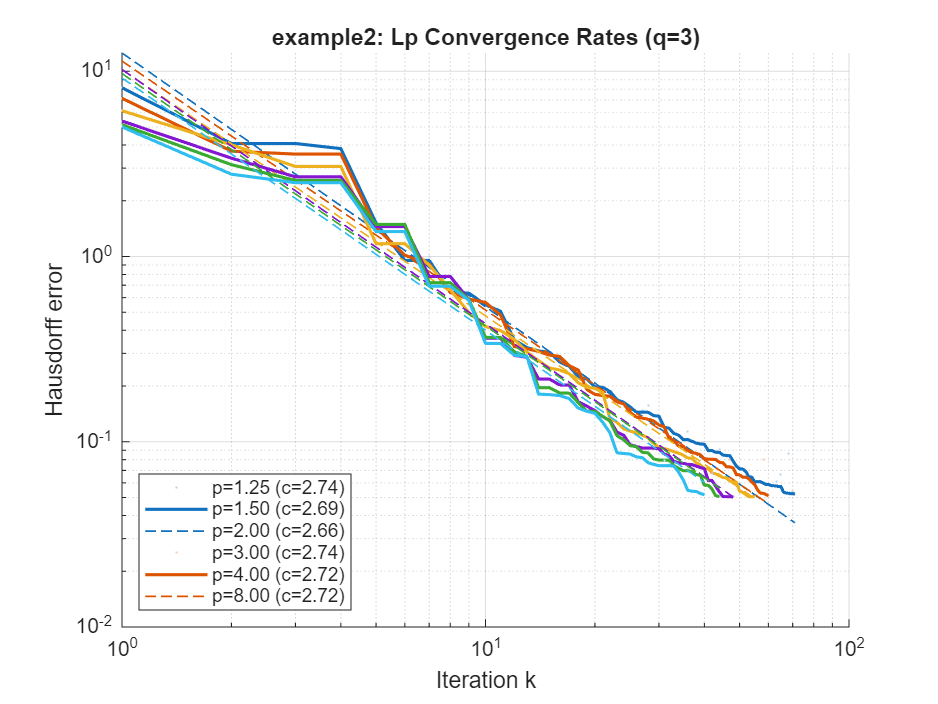}
\caption{Convergence of the Hausdorff error $\delta_H(P_k, A;\, \|\cdot\|_p)$ for Example~2 with $q=3$, $p \in \{1.25, 1.5, 2, 3, 4, 8\}$, and $\varepsilon = 0.05$.}\label{fig:convergence-example2}
\end{figure}

\subsection{Discussion}\label{sec:discussion}

The experiments support the main theoretical result.

Across all four experimental configurations (Example~1 with $q = 2$ and $q = 3$, the rotated ellipse with $q = 2$, and Example~2 with $q = 3$) and all six values of $p$, the fitted exponent $\hat{c}(p)$ shows no systematic dependence on $p$. Within each configuration, the standard deviation of $\hat{c}$ across $p$ is at most $0.1$, and the spread $\hat{c}_{\max} - \hat{c}_{\min}$ ranges from $0.09$ (Example~2) to $0.32$ (rotated ellipse). The earlier conjecture $c(p) = \min(p, p/(p-1))$, which predicts $c(2) = 2$ and $c(p) \to 1$ as $p \to 1^+$ or $p \to \infty$, is contradicted by the flat profile of $\hat{c}$ across $p$.

The absolute value of $\hat{c}$ depends on the number of objectives $q$. For $q = 3$, the fitted exponents overestimate the theoretical $c = 2$: $\hat{c} \approx 2.4$ for Example~1 and $\hat{c} \approx 2.7$ for Example~2. This overestimate is caused by an initial plateau of $3$--$5$ iterations where the error barely decreases, which steepens the fitted slope. For $q = 2$, the fitted exponents underestimate $c = 2$: $\hat{c} \approx 1.55$ for Example~1 and $\hat{c} \approx 1.6$ for the rotated ellipse. This underestimate arises because the theoretical rate $O(k^{-2})$ for $q = 2$ is faster, so the algorithm terminates in fewer iterations and the log-log fit captures transient behavior rather than the asymptotic slope. In both cases, the Euclidean run ($p = 2$) shows the same bias, confirming that the deviation from $c = 2$ is a finite-sample effect, not a $p$-dependent one.

The convergence \emph{constants} do depend on $p$: in Example~1 with $q = 3$, the algorithm converges in $89$ iterations for $p = 1.25$ but only $50$ for $p = 8$. This is consistent with the theoretical constant $\Lambda(p,q,R,A)$ in Theorem~\ref{thm:lp-rate}, which depends on $p$ through $N_{2,p}$ and the packing constant. The Euclidean case $p = 2$ achieves $N_{2,2} = 1$ and the smallest constants.

\section{Conclusion}\label{sec:conclusion}

We have established that the convergence rate exponent of the norm-minimization-based outer approximation algorithm for convex vector optimization is $c(p) = 2$ for every $\ell_p$ norm with $p \in (1,\infty)$, matching the optimal Euclidean rate $O(k^{2/(1-q)})$. This result resolves the question of how the choice of scalarization norm affects convergence speed within the $\ell_p$ family: the exponent is entirely $p$-independent.

The proof relies on the \emph{Euclidean intermediary technique} introduced in Lemma~\ref{lem:hyperplane-distance}. A direct analysis via the modulus of smoothness of $\|\cdot\|_p$ yields only $c(p) = \min(p,2)$, which degrades for $1 < p < 2$. Instead, we expand the squared Euclidean norm of the deviation vector difference and exploit the non-negativity of all resulting terms under the support conditions. Converting to the $\ell_p$ metric via norm equivalence costs only a dimension-dependent constant, preserving the quadratic exponent.

Together with \cite{Alshahrani2026}, which proves $c = 2$ for all inner-product norms $\|\cdot\|_M$, the present work shows that the rate exponent~$2$ is robust across two broad families of norms: inner-product norms (where the proof uses isometry invariance) and $\ell_p$ norms (where the proof uses the Euclidean intermediary). In both cases, the ambient Euclidean structure of $\R^q$ plays a decisive role, suggesting that the exponent~$2$ may be a universal feature of the algorithm rather than an artifact of any particular norm geometry.

Several directions remain open:
\begin{enumerate}[label=(\roman*)]
\item Does $c = 2$ hold for \emph{every} strictly convex, smooth norm on $\R^q$? The Euclidean intermediary argument does not require any special property of the scalarization norm beyond strict convexity and differentiability; the $\ell_p$ structure is used only in the packing estimates and the norm equivalence constant. An extension to arbitrary norms would settle this question.

\item The $\ell_1$ and $\ell_\infty$ norms lack strict convexity and smoothness, respectively, so the algorithm's cut uniqueness (Lemma~\ref{lem:strict-convexity}) fails. Whether a modified algorithm can achieve algebraic convergence rates at these endpoints is unclear.

\item While the exponent is $p$-independent, the constant $\Lambda(p,q,R,A)$ grows with the norm equivalence factor $N_{2,p}$ (Remark~\ref{rem:norm-equiv-role}). Our numerical experiments confirm that the Euclidean case $p = 2$ achieves the best constants. Determining the sharp dependence of $\Lambda$ on $p$ and $q$, and whether the norm equivalence bound is tight, remains open.

\item Since $p$ affects only the constants, one could in principle select or adapt the scalarization norm during the algorithm to minimize the approximation constant. How adaptive norm strategies interact with the H-sequence framework is an open question.
\end{enumerate}

\section*{Funding}
This research received no external funding.

\section*{Data and Code Availability}
No external data were used in this study. All numerical results were generated by the algorithms described in the paper. The code used to produce the numerical experiments is available from the corresponding author upon reasonable request.

\section*{Acknowledgements}

The author acknowledges the support of King Fahd University of Petroleum \& Minerals (KFUPM) and the Interdisciplinary Research Center for Smart Mobility and Logistics at KFUPM.

\printbibliography

@misc{Alshahrani2026,
	title = {Adaptive {Metrics} for {Norm}-{Minimization}-{Based} {Outer} {Approximation} in {Convex} {Vector} {Optimization}},
	url = {http://arxiv.org/abs/2605.14320},
	doi = {10.48550/arXiv.2605.14320},
	urldate = {2026-05-15},
	publisher = {arXiv},
	author = {Alshahrani, Mohammed},
	month = may,
	year = {2026},
}

@article{Glasauer1997,
	title = {Asymptotic estimates for best and stepwise approximation of convex bodies {III}},
	volume = {9},
	issn = {0933-7741, 1435-5337},
	url = {https://www.degruyter.com/document/doi/10.1515/form.1997.9.383/html},
	doi = {10.1515/form.1997.9.383},
	language = {en},
	number = {9},
	urldate = {2026-02-12},
	journal = {Forum Mathematicum},
	author = {Glasauer, Stefan and Gruber, Peter M.},
	year = {1997},
	pages = {383--404},
}

@book{Lohne2011,
	address = {Berlin, Heidelberg},
	series = {Vector {Optimization}},
	title = {Vector {Optimization} with {Infimum} and {Supremum}},
	copyright = {https://www.springernature.com/gp/researchers/text-and-data-mining},
	url = {https://link.springer.com/10.1007/978-3-642-18351-5},
	doi = {10.1007/978-3-642-18351-5},
	language = {en},
	urldate = {2026-01-24},
	publisher = {Springer Berlin Heidelberg},
	author = {Löhne, Andreas},
	year = {2011},
}

@article{Gruber1993,
	title = {Asymptotic estimates for best and stepwise approximation of convex bodies {II}},
	volume = {5},
	url = {https://doi.org/10.1515/form.1993.5.521},
	doi = {10.1515/form.1993.5.521},
	number = {Jahresband},
	urldate = {2026-02-12},
	journal = {Forum Mathematicum},
	author = {Gruber, Peter M},
	year = {1993},
	pages = {521--538},
}

@article{Clarkson1936,
	title = {Uniformly convex spaces},
	volume = {40},
	number = {3},
	journal = {Transactions of the American Mathematical Society},
	author = {Clarkson, James A},
	year = {1936},
	pages = {396--414},
}

@article{Wagner2023,
	title = {Algorithms to {Solve} {Unbounded} {Convex} {Vector} {Optimization} {Problems}},
	volume = {33},
	doi = {10.1137/22M1507693},
	number = {4},
	journal = {SIAM Journal on Optimization},
	author = {Wagner, Alexander and Ulus, Firdevs and Rudloff, Birgit and Kováčová, Gabriela and Hey, Norman},
	year = {2023},
	pages = {2598--2624},
}

@article{Sprung2019,
	title = {Upper and lower bounds for the {Bregman} divergence},
	volume = {2019},
	issn = {1029-242X},
	url = {https://doi.org/10.1186/s13660-018-1953-y},
	doi = {10.1186/s13660-018-1953-y},
	language = {en},
	number = {1},
	urldate = {2026-02-23},
	journal = {Journal of Inequalities and Applications},
	author = {Sprung, Benjamin},
	month = jan,
	year = {2019},
	pages = {4},
}

@article{bensolve,
	title = {The {Vector} {Linear} {Program} {Solver} {Bensolve} – {Notes} on {Theoretical} {Background}},
	volume = {260},
	doi = {10.1016/j.ejor.2016.02.039},
	number = {3},
	journal = {European Journal of Operational Research},
	author = {Löhne, Andreas and Weißing, Benjamin},
	year = {2017},
	pages = {807--813},
}

@misc{cvx,
	title = {{CVX}: {MATLAB} {Software} for {Disciplined} {Convex} {Programming}, {Version} 2.2},
	url = {http://cvxr.com/cvx},
	author = {Grant, Michael and Boyd, Stephen},
	month = mar,
	year = {2020},
}

@book{Lotov2004,
	address = {Boston, MA},
	series = {Applied {Optimization}},
	title = {Interactive {Decision} {Maps}},
	volume = {89},
	copyright = {http://www.springer.com/tdm},
	isbn = {978-1-4613-4690-6 978-1-4419-8851-5},
	url = {http://link.springer.com/10.1007/978-1-4419-8851-5},
	doi = {10.1007/978-1-4419-8851-5},
	urldate = {2026-02-23},
	publisher = {Springer US},
	author = {Lotov, Alexander V. and Bushenkov, Vladimir A. and Kamenev, Georgy K.},
	editor = {Pardalos, Panos M. and Hearn, Donald W.},
	year = {2004},
}

@article{Bronstein2008,
	title = {Approximation of convex sets by polytopes},
	volume = {153},
	issn = {1573-8795},
	url = {https://doi.org/10.1007/s10958-008-9144-x},
	doi = {10.1007/s10958-008-9144-x},
	language = {en},
	number = {6},
	urldate = {2026-02-23},
	journal = {Journal of Mathematical Sciences},
	author = {Bronstein, E. M.},
	month = sep,
	year = {2008},
	pages = {727--762},
}

@book{Schneider2013,
	address = {Cambridge},
	edition = {2},
	series = {Encyclopedia of {Mathematics} and its {Applications}},
	title = {Convex {Bodies}: {The} {Brunn}–{Minkowski} {Theory}},
	isbn = {978-1-107-60101-7},
	shorttitle = {Convex {Bodies}},
	url = {https://www.cambridge.org/core/books/convex-bodies-the-brunnminkowski-theory/400F6173EE613859F144E9598DDD8BDF},
	doi = {10.1017/CBO9781139003858},
	urldate = {2026-02-23},
	publisher = {Cambridge University Press},
	author = {Schneider, Rolf},
	year = {2013},
}

@article{Gruber1993a,
	chapter = {Forum Mathematicum},
	title = {Asymptotic estimates for best and stepwise approximation of convex bodies {I}},
	volume = {5},
	copyright = {De Gruyter expressly reserves the right to use all content for commercial text and data mining within the meaning of Section 44b of the German Copyright Act.},
	issn = {1435-5337},
	url = {https://www.degruyterbrill.com/document/doi/10.1515/form.1993.5.281/html},
	doi = {10.1515/form.1993.5.281},
	language = {en},
	number = {Jahresband},
	urldate = {2026-02-12},
	publisher = {De Gruyter},
	author = {Gruber, Peter M.},
	month = jan,
	year = {1993},
	pages = {281--298},
}

@article{Boroczky2000,
	title = {The error of polytopal approximation with respect to the symmetric difference metric and {theLpmetric}},
	volume = {117},
	issn = {1565-8511},
	url = {https://doi.org/10.1007/BF02773561},
	doi = {10.1007/BF02773561},
	language = {en},
	number = {1},
	urldate = {2026-02-23},
	journal = {Israel Journal of Mathematics},
	author = {Böröczky, Károly},
	month = dec,
	year = {2000},
	pages = {1--28},
}

@article{Ball1994,
	title = {Sharp uniform convexity and smoothness inequalities for trace norms},
	volume = {115},
	issn = {1432-1297},
	url = {https://doi.org/10.1007/BF01231769},
	doi = {10.1007/BF01231769},
	language = {en},
	number = {1},
	urldate = {2026-02-23},
	journal = {Inventiones mathematicae},
	author = {Ball, Keith and Carlen, Eric A. and Lieb, Elliott H.},
	month = dec,
	year = {1994},
	pages = {463--482},
}

@article{Kamenev2002,
	title = {Conjugate {Adaptive} {Algorithms} for {Polyhedral} {Approximation} of {Convex} {Bodies}},
	volume = {42},
	number = {9},
	journal = {Computational Mathematics and Mathematical Physics},
	author = {Kamenev, G. K.},
	year = {2002},
	pages = {1301--1316},
}

@article{Kamenev1992,
	title = {A {Class} of {Adaptive} {Algorithms} for {Approximating} {Convex} {Bodies} by {Polyhedra}},
	volume = {32},
	number = {1},
	journal = {Computational Mathematics and Mathematical Physics},
	author = {Kamenev, G. K.},
	year = {1992},
	pages = {114--127},
}

@article{Hanner1956,
	title = {On the uniform convexity {ofLpandlp}},
	volume = {3},
	issn = {1871-2487},
	url = {https://doi.org/10.1007/BF02589410},
	doi = {10.1007/BF02589410},
	language = {en},
	number = {3},
	urldate = {2026-02-12},
	journal = {Arkiv för Matematik},
	author = {Hanner, Olof},
	month = feb,
	year = {1956},
	pages = {239--244},
}

@article{Xu1991,
	title = {Characteristic inequalities of uniformly convex and uniformly smooth {Banach} spaces},
	volume = {157},
	issn = {0022-247X},
	url = {https://www.sciencedirect.com/science/article/pii/0022247X9190144O},
	doi = {10.1016/0022-247X(91)90144-O},
	number = {1},
	urldate = {2026-02-23},
	journal = {Journal of Mathematical Analysis and Applications},
	author = {Xu, Zong-Ben and Roach, G. F},
	month = may,
	year = {1991},
	pages = {189--210},
}

@incollection{Gruber1994,
	address = {Dordrecht},
	title = {Approximation by {Convex} {Polytopes}},
	isbn = {978-94-010-4398-4 978-94-011-0924-6},
	url = {http://link.springer.com/10.1007/978-94-011-0924-6_8},
	doi = {10.1007/978-94-011-0924-6_8},
	urldate = {2026-02-12},
	booktitle = {Polytopes: {Abstract}, {Convex} and {Computational}},
	publisher = {Springer Netherlands},
	author = {Gruber, P. M.},
	editor = {Bisztriczky, T. and McMullen, P. and Schneider, R. and Weiss, A. Ivić},
	year = {1994},
	doi = {10.1007/978-94-011-0924-6_8},
	pages = {173--203},
}

@article{Ararat2022,
	title = {A {Norm} {Minimization}-{Based} {Convex} {Vector} {Optimization} {Algorithm}},
	volume = {194},
	issn = {1573-2878},
	url = {https://doi.org/10.1007/s10957-022-02045-8},
	doi = {10.1007/s10957-022-02045-8},
	language = {en},
	number = {2},
	urldate = {2026-01-18},
	journal = {Journal of Optimization Theory and Applications},
	author = {Ararat, Çağın and Ulus, Firdevs and Umer, Muhammad},
	month = aug,
	year = {2022},
	pages = {681--712},
}

@book{Lindenstrauss1996,
	address = {Berlin, Heidelberg},
	series = {Classics in {Mathematics}},
	title = {Classical {Banach} {Spaces} {I} and {II}: {Sequence} {Spaces} and {Function} {Spaces}},
	copyright = {https://www.springer.com/tdm},
	isbn = {978-3-540-60628-4 978-3-662-53294-2},
	shorttitle = {Classical {Banach} {Spaces} {I} and {II}},
	url = {https://link.springer.com/10.1007/978-3-662-53294-2},
	doi = {10.1007/978-3-662-53294-2},
	language = {en},
	urldate = {2026-02-12},
	publisher = {Springer},
	author = {Lindenstrauss, Joram and Tzafriri, Lior},
	year = {1996},
}

@article{Keskin2023,
	title = {Outer {Approximation} {Algorithms} for {Convex} {Vector} {Optimization} {Problems}},
	volume = {38},
	issn = {1055-6788},
	url = {https://doi.org/10.1080/10556788.2023.2167994},
	doi = {10.1080/10556788.2023.2167994},
	number = {4},
	urldate = {2026-01-19},
	journal = {Optimization Methods and Software},
	publisher = {Taylor \& Francis},
	author = {Keskin, İrem Nur and Ulus, Firdevs},
	month = jul,
	year = {2023},
	pages = {723--755},
}

@article{Eichfelder2026,
	title = {Local {Upper} {Bounds} {Based} on {Polyhedral} {Ordering} {Cones}},
	volume = {14},
	issn = {2192-4406},
	url = {https://www.sciencedirect.com/science/article/pii/S2192440625000218},
	doi = {10.1016/j.ejco.2025.100124},
	urldate = {2026-01-18},
	journal = {EURO Journal on Computational Optimization},
	author = {Eichfelder, Gabriele and Ulus, Firdevs},
	month = jan,
	year = {2026},
	pages = {100124},
}

@article{Eichfelder2022,
	title = {An {Approximation} {Algorithm} for {Multi}-{Objective} {Optimization} {Problems} {Using} a {Box}-{Coverage}},
	volume = {83},
	issn = {0925-5001, 1573-2916},
	url = {https://link.springer.com/10.1007/s10898-021-01109-9},
	doi = {10.1007/s10898-021-01109-9},
	language = {en},
	number = {2},
	urldate = {2026-01-24},
	journal = {Journal of Global Optimization},
	author = {Eichfelder, Gabriele and Warnow, Leo},
	month = jun,
	year = {2022},
	pages = {329--357},
}

@article{Lohne2014,
	title = {Primal and {Dual} {Approximation} {Algorithms} for {Convex} {Vector} {Optimization} {Problems}},
	volume = {60},
	copyright = {http://www.springer.com/tdm},
	issn = {0925-5001, 1573-2916},
	url = {http://link.springer.com/10.1007/s10898-013-0136-0},
	doi = {10.1007/s10898-013-0136-0},
	language = {en},
	number = {4},
	urldate = {2024-04-01},
	journal = {Journal of Global Optimization},
	author = {Löhne, Andreas and Rudloff, Birgit and Ulus, Firdevs},
	month = dec,
	year = {2014},
	pages = {713--736},
}

@article{Ehrgott2011,
	title = {An {Approximation} {Algorithm} for {Convex} {Multi}-{Objective} {Programming} {Problems}},
	volume = {50},
	issn = {0925-5001, 1573-2916},
	url = {https://link.springer.com/10.1007/s10898-010-9588-7},
	doi = {10.1007/s10898-010-9588-7},
	language = {en},
	number = {3},
	urldate = {2026-01-24},
	journal = {Journal of Global Optimization},
	author = {Ehrgott, Matthias and Shao, Lizhen and Schöbel, Anita},
	month = jul,
	year = {2011},
	pages = {397--416},
}

@article{Gruber1982,
	title = {Approximation of {Convex} {Bodies} by {Polytopes}},
	volume = {31},
	issn = {1973-4409},
	url = {https://doi.org/10.1007/BF02844354},
	doi = {10.1007/BF02844354},
	language = {en},
	number = {2},
	urldate = {2026-01-26},
	journal = {Rendiconti del Circolo Matematico di Palermo},
	author = {Gruber, Peter M. and Kenderov, Petar},
	month = jun,
	year = {1982},
	pages = {195--225},
}

@article{Benson1998,
	title = {An {Outer} {Approximation} {Algorithm} for {Generating} {All} {Efficient} {Extreme} {Points} in the {Outcome} {Set} of a {Multiple} {Objective} {Linear} {Programming} {Problem}},
	volume = {13},
	issn = {1573-2916},
	url = {https://doi.org/10.1023/A:1008215702611},
	doi = {10.1023/A:1008215702611},
	number = {1},
	journal = {Journal of Global Optimization},
	author = {Benson, Harold P.},
	month = jan,
	year = {1998},
	pages = {1--24},
}

@article{Ararat2024,
	title = {Convergence {Analysis} of a {Norm} {Minimization}-{Based} {Convex} {Vector} {Optimization} {Algorithm}},
	volume = {34},
	issn = {1052-6234},
	url = {https://epubs.siam.org/doi/abs/10.1137/23M1574580},
	doi = {10.1137/23M1574580},
	number = {3},
	urldate = {2026-01-18},
	journal = {SIAM Journal on Optimization},
	publisher = {Society for Industrial and Applied Mathematics},
	author = {Ararat, Çağin and Ulus, Firdevs and Umer, Muhammad},
	month = sep,
	year = {2024},
	pages = {2700--2728},
}

\end{document}